\documentclass[]{article}

\usepackage[a4paper,top=2.1cm,bottom=2.60cm,left=2.1cm,right=2.1cm]{geometry} 

\usepackage{latexsym}
\usepackage{amsmath}
\usepackage{amsfonts}
\usepackage{mathrsfs}
\usepackage{amstext}
\usepackage{amssymb}
\usepackage{amsthm}       

\usepackage{moreverb}

\usepackage[dvips,colorlinks,bookmarksopen,bookmarksnumbered,citecolor=red,urlcolor=red]{hyperref}

\newcommand\BibTeX{{\rmfamily B\kern-.05em \textsc{i\kern-.025em b}\kern-.08em
T\kern-.1667em\lower.7ex\hbox{E}\kern-.125emX}}

 \newtheorem{thm}{Theorem}[section]
 
 \newtheorem{lem}[thm]{Lemma}
 \newtheorem{prop}[thm]{Proposition}
 \theoremstyle{definition}
 
 \theoremstyle{remark}
 \newtheorem{rem}[thm]{Remark}
 
 \numberwithin{equation}{section}

\begin{document}

%
\title{A singularly perturbed Dirichlet problem for the Poisson equation in a periodically perforated domain. A functional analytic approach}

\author{Paolo Musolino}

\date{}

\maketitle


\noindent
{\bf Abstract:} Let $\Omega$ be a sufficiently regular bounded open connected subset of $\mathbb{R}^n$ such that $0 \in \Omega$ and that $\mathbb{R}^n \setminus \mathrm{cl}\Omega$ is connected. Then we take $(q_{11},\dots, q_{nn})\in ]0,+\infty[^n$ and $p \in Q\equiv \prod_{j=1}^{n}]0,q_{jj}[$. If $\epsilon$ is a small positive number, then we define the periodically perforated domain $\mathbb{S}[\Omega_{p,\epsilon}]^{-} \equiv \mathbb{R}^n\setminus \cup_{z \in \mathbb{Z}^n}\mathrm{cl}\bigl(p+\epsilon \Omega +\sum_{j=1}^n (q_{jj}z_j)e_j\bigr)$, where $\{e_1,\dots,e_n\}$ is the canonical basis of $\mathbb{R}^n$. For $\epsilon$ small and positive, we introduce a particular Dirichlet problem for the Poisson equation in the set $\mathbb{S}[\Omega_{p,\epsilon}]^{-}$. Namely, we consider a Dirichlet condition on the boundary of the set $p+\epsilon \Omega$, together with a periodicity condition. Then we show real analytic continuation properties of the solution as a function of $\epsilon$, of the Dirichlet datum on $p+\epsilon \partial \Omega$, and of the Poisson datum, around a degenerate triple with $\epsilon=0$.

\vspace{11pt}

\noindent
{\bf Keywords:} Dirichlet problem; singularly perturbed domain; Poisson equation; periodically perforated domain; real analytic continuation in Banach space

\noindent
{\bf MSC:} 35J25; 31B10; 45A05; 47H30


\section{Introduction}
In this article, we consider a Dirichlet problem for the Poisson equation in a periodically perforated domain with small holes. We fix once for all  a natural number
\[
n\in {\mathbb{N}}\setminus\{0,1 \}
\]
and
\[
 (q_{11},\dots,q_{nn})\in]0,+\infty[^{n}
\]
and a periodicity cell
\[
Q\equiv\Pi_{j=1}^{n}]0,q_{jj}[\,.
\]
Then we denote by $\mathrm{meas}(Q)$ the measure of the fundamental cell $Q$, and by $\nu_Q$ the outward unit normal to $\partial Q$, where it exists. We denote by $q$ the $n\times n$ diagonal matrix defined by $q\equiv \left(\delta_{i,j}q_{jj}\right)_{i,j=1,\dots,n}$, where $\delta_{i,j}=1$ if $i=j$ and $\delta_{i,j}=0$ if $i \neq j$, for all $i,j\in\{1,\dots,n\}$. Clearly, $q {\mathbb{Z}}^{n}\equiv  \{qz:\,z\in{\mathbb{Z}}^{n}\}$ is the set of vertices of a periodic subdivision of ${\mathbb{R}}^{n}$ corresponding to the fundamental cell $Q$. Let $m\in {\mathbb{N}}\setminus\{0\}$, $\alpha\in]0,1[$. Then we take a point $p \in Q$ and a bounded open connected subset $\Omega$ of ${\mathbb{R}}^{n}$ of class $C^{m,\alpha}$ such that $\Omega^{-}\equiv {\mathbb{R}}^{n}\setminus{\mathrm{cl}}\Omega$ is connected and that $0 \in \Omega$. Here `$\mathrm{cl}$' denotes the closure. If $\epsilon \in \mathbb{R}$, then we set
\[
\Omega_{p,\epsilon}\equiv p +\epsilon \Omega\,.
\]
Then we take $\epsilon_0>0$ such that $\mathrm{cl}\Omega_{p,\epsilon} \subseteq Q$ for $|\epsilon|< \epsilon_0$, and we introduce the periodically perforated domain
\[
{\mathbb{S}} [\Omega_{p,\epsilon}]^{-}\equiv {\mathbb{R}}^{n}\setminus\bigcup_{z \in \mathbb{Z}^n}{\mathrm{cl}}(\Omega_{p,\epsilon}+qz)\,,
\]
for $\epsilon \in ]-\epsilon_0,\epsilon_0[$. Let $\rho>0$. Next we fix a function $g_0$ in the Schauder space $C^{m,\alpha}(\partial \Omega)$ and a function $f_0$ in the Roumieu class $C^0_{q,\omega,\rho}(\mathbb{R}^n)_0$ of real analytic periodic functions with vanishing integral on $Q$ (see \eqref{proum} and \eqref{proum0} in Section \ref{not}.) For each triple $(\epsilon,g,f) \in ]0,\epsilon_0[\times C^{m,\alpha}(\partial \Omega) \times C^0_{q,\omega,\rho}(\mathbb{R}^n)_0$ we consider the Dirichlet problem 
 \begin{equation}\label{bvp:Direps}
 \left \lbrace 
 \begin{array}{ll}
 \Delta u (x)= f(x) & \textrm{$\forall x \in {\mathbb{S}} [\Omega_{p,\epsilon}]^{-}$}\,, \\
u(x+qe_j) =u(x) &  \textrm{$\forall x \in \mathrm{cl} {\mathbb{S}} [\Omega_{p,\epsilon}]^{-}\,, \quad \forall j \in \{1,\dots,n\}$}\,, \\
u(x)=g\bigl((x-p)/\epsilon\bigr) & \textrm{$\forall x \in \partial \Omega_{p,\epsilon}$}\,,
 \end{array}
 \right.
 \end{equation}
where $\{e_1,\dots,e_n\}$ is the canonical basis of $\mathbb{R}^n$. If $(\epsilon,g,f) \in ]0,\epsilon_0[\times C^{m,\alpha}(\partial \Omega)\times  C^0_{q,\omega,\rho}(\mathbb{R}^n)_0$, then problem \eqref{bvp:Direps} has a unique solution in $C^{m,\alpha}(\mathrm{cl} {\mathbb{S}} [\Omega_{p,\epsilon}]^{-})$, and we denote it by $u[\epsilon,g,f](\cdot)$ (cf.~Proposition \ref{prop:Dirsol}.)

Then we pose the following questions.
\begin{enumerate}
\item[(i)] Let $x$ be fixed in $\mathbb{R}^n \setminus (p+q\mathbb{Z}^n)$. What can be said on the map $(\epsilon,g,f) \mapsto u[\epsilon,g,f](x)$ when $(\epsilon,g,f)$ approaches $(0,g_0,f_0)$?
\item[(ii)] What can be said on the map $(\epsilon,g,f) \mapsto\int_{Q\setminus \mathrm{cl} \Omega_{p,\epsilon}}|D_x u[\epsilon,g,f](x)|^2 \, dx$ when $(\epsilon,g,f)$ approaches $(0,g_0,f_0)$?
\item[(iii)] What can be said on the map $(\epsilon,g,f) \mapsto\int_{Q\setminus \mathrm{cl} \Omega_{p,\epsilon}}u[\epsilon,g,f](x) \, dx$ when $(\epsilon,g,f)$ approaches $(0,g_0,f_0)$?
\end{enumerate}

Questions of this type have long been investigated with the methods of Asymptotic Analysis, which aim at computing, for example, an asymptotic expansion of the value of the solution at a fixed point in terms of the parameter $\epsilon$. Here, we mention, \textit{e.g.}, Ammari and Kang \cite[Ch.~5]{AmKa07}, Kozlov, Maz'ya, and Movchan \cite{KoMaMo99}, Maz'ya, Nazarov, and Plamenewskij \cite{MaNaPl00}, Ozawa \cite{Oz83}, Ward and Keller \cite{WaKe93}. We also mention the vast literature of Calculus of Variations and of Homogenization Theory, where the interest is focused on the limiting behaviour as the singular perturbation parameter  degenerates (cf.~\textit{e.g.}, Cioranescu and Murat \cite{CiMu82i, CiMu82ii}.) 

Here instead we wish to characterize the behaviour of $u[\epsilon,g,f](\cdot)$ at $(\epsilon,g,f)=(0,g_0,f_0)$ by a different approach. Thus for example, if we consider a certain function, say $F(\epsilon,g,f)$, relative to the solution such as, for example, one of those considered in questions (i)--(iii) above, we would try to prove that $F(\cdot,\cdot,\cdot)$ can be continued real analytically around $(\epsilon,g,f)=(0,g_0,f_0)$. We observe that our approach does have its advantages. Indeed, if we know that $F(\epsilon,g,f)$ equals, for $\epsilon>0$, a real analytic operator of the variable $(\epsilon, g,f)$ defined on a whole neighborhood of $(0,g_0,f_0)$, then, for example, we could infer the existence of a sequence of real numbers $\{a_j\}_{j=0}^{\infty}$ and of a positive real number $\epsilon' \in ]0,\epsilon_0[$ such that
\[
F(\epsilon,g_0,f_0)=\sum_{j=0}^{\infty}a_j \epsilon^{j} \qquad \forall \epsilon \in ]0,\epsilon'[\, , 
\] 
where the power series in the right hand side converges absolutely on the whole of $]-\epsilon',\epsilon'[$. Such a project has been carried out by Lanza de Cristoforis in several papers for problems in a bounded domain with a small hole (cf.~\textit{e.g.}, Lanza \cite{La02, La04,La08,La09a}.) In the frame of linearized elastostatics, we mention, \textit{e.g.}, Dalla Riva and Lanza \cite{DaLa10b, DaLa10}, and for a boundary value problem for the Stokes system in a singularly perturbed exterior domain we refer to Dalla Riva \cite{Da11}. Instead, for periodic problems, we mention \cite{LaMu11,Mu11}, where Dirichlet and nonlinear Robin problems for the Laplace equation have been considered.

As far as problems in periodically perforated domains are concerned, we mention, for instance, Ammari and Kang \cite[Ch. 8]{AmKa07}.  We also observe that boundary value problems in domains with periodic inclusions can be analysed, at least for the two dimensional case, with the method of functional equations. Here we mention, \textit{e.g.}, Castro and Pesetskaya \cite{CaPe10}, Castro, Pesetskaya, and Rogosin \cite{CaPeRo09}, Drygas and Mityushev \cite{DrMi09}, Mityushev and Adler \cite{MiAd02i}, Rogosin, Dubatovskaya, and Pesetskaya \cite{RoDuPe09}. In connection with doubly periodic problems for composite materials, we mention the monograph of Grigolyuk  and Fil'shtinskij \cite{GrFi92}.\par

 We now briefly outline our strategy. By means of a periodic analog of the Newtonian potential, we can convert problem \eqref{bvp:Direps} into an auxiliary Dirichlet problem for the Laplace equation. Next we analyze the dependence of the solution of the auxiliary problem upon the triple $(\epsilon,g,f)$ by exploiting the results of \cite{Mu11} on the homogeneous Dirichlet problem and of Lanza \cite{La05} on the Newtonian potential in the Roumieu classes, and then we deduce the representation of $u[\epsilon,g,f](\cdot)$ in terms of $\epsilon$, $g$, and $f$, and we prove our main results.

This article is organized as follows. Section \ref{not} is a section of preliminaries. In Section \ref{form}, we formulate the auxiliary Dirichlet problem for the Laplace equation and we show that the solutions of the auxiliary problem depend real analytically on $\epsilon$, $g$, and $f$. In Section \ref{rep}, we show that the results of Section \ref{form} can be exploited to prove Theorem \ref{thm:rep} on the representation of $u[\epsilon,g,f](\cdot)$, Theorems \ref{thm:en} and \ref{thm:int} on the representation of the energy integral  and of the integral of the solution, respectively. At the end of the paper, we have included an Appendix on a slight variant of a result on composition operators of Preciso (cf.~Preciso \cite[Prop.~4.2.16, p.~51]{Pr98}, Preciso \cite[Prop.~1.1, p.~101]{Pr00}), which belongs to a different flow of ideas and which accordingly we prefer to discuss separately.

\section{Notation and preliminaries}\label{not}

We  denote the norm on 
a   normed space ${\mathcal X}$ by $\|\cdot\|_{{\mathcal X}}$. Let 
${\mathcal X}$ and ${\mathcal Y}$ be normed spaces. We endow the  
space ${\mathcal X}\times {\mathcal Y}$ with the norm defined by 
$\|(x,y)\|_{{\mathcal X}\times {\mathcal Y}}\equiv \|x\|_{{\mathcal X}}+
\|y\|_{{\mathcal Y}}$ for all $(x,y)\in  {\mathcal X}\times {\mathcal 
Y}$, while we use the Euclidean norm for ${\mathbb{R}}^{n}$.
 For 
standard definitions of Calculus in normed spaces, we refer to 
Prodi and Ambrosetti~\cite{PrAm73}. The symbol ${\mathbb{N}}$ denotes the 
set of natural numbers including $0$. A 
dot ``$\cdot$'' denotes the inner product in ${\mathbb R}^{n}$. Let $A$ be a 
matrix. Then $A_{ij}$ denotes 
the $(i,j)$--entry of $A$, and the inverse of the 
matrix $A$ is denoted by $A^{-1}$. Let 
${\mathbb{D}}\subseteq {\mathbb {R}}^{n}$. Then $\mathrm{cl}{\mathbb{D}}$ 
denotes the 
closure of ${\mathbb{D}}$ and $\partial{\mathbb{D}}$ denotes the boundary of ${\mathbb{D}}$. For all $R>0$, $ x\in{\mathbb{R}}^{n}$, 
$x_{j}$ denotes the $j$--th coordinate of $x$, 
$| x|$ denotes the Euclidean modulus of $ x$ in
${\mathbb{R}}^{n}$, and ${\mathbb{B}}_{n}( x,R)$ denotes the ball $\{
y\in{\mathbb{R}}^{n}:\, | x- y|<R\}$. 
Let $\Omega$ be an open 
subset of ${\mathbb{R}}^{n}$. The space of $m$ times continuously 
differentiable real-valued functions on $\Omega$ is denoted by 
$C^{m}(\Omega,{\mathbb{R}})$, or more simply by $C^{m}(\Omega)$. 
${\mathcal{D}}(\Omega)$ denotes the space of functions of  $C^{\infty}(\Omega)$
with compact support. The dual ${\mathcal{D}}'(\Omega)$ denotes the space
of distributions in $\Omega$.
Let $r \in \mathbb{N}\setminus \{0\}$. Let $f\in \left(C^{m}(\Omega)\right)^{r}$. The 
$s$--th component of $f$ is denoted $f_{s}$, and $Df$ denotes the Jacobian matrix
$\left(\frac{\partial f_s}{\partial
x_l}\right)_{  \substack{ s=1,\dots ,r,    \\  l=1,\dots ,n}       }$. Let  $\eta\equiv
(\eta_{1},\dots ,\eta_{n})\in{\mathbb{N}}^{n}$, $|\eta |\equiv
\eta_{1}+\dots +\eta_{n}  $. Then $D^{\eta} f$ denotes
$\frac{\partial^{|\eta|}f}{\partial
x_{1}^{\eta_{1}}\dots\partial x_{n}^{\eta_{n}}}$.    The
subspace of $C^{m}(\Omega )$ of those functions $f$ whose derivatives $D^{\eta }f$ of
order $|\eta |\leq m$ can be extended with continuity to 
$\mathrm{cl}\Omega$  is  denoted $C^{m}(
\mathrm{cl}\Omega )$. 
The
subspace of $C^{m}(\mathrm{cl}\Omega ) $  whose
functions have $m$--th order derivatives that are
H\"{o}lder continuous  with exponent  $\alpha\in
]0,1]$ is denoted $C^{m,\alpha} (\mathrm{cl}\Omega )$  
(cf.~\textit{e.g.},~Gilbarg and 
Trudinger~\cite{GiTr83}.) The subspace of $C^{m}(\mathrm{cl}\Omega ) $ of those functions $f$ such that $f_{|{\mathrm{cl}}(\Omega\cap{\mathbb{B}}_{n}(0,R))}\in
C^{m,\alpha}({\mathrm{cl}}(\Omega\cap{\mathbb{B}}_{n}(0,R)))$ for all $R\in]0,+\infty[$ is denoted $C^{m,\alpha}_{{\mathrm{loc}}}(\mathrm{cl}\Omega ) $.  Let 
${\mathbb{D}}\subseteq {\mathbb{R}}^{r}$. Then $C^{m
,\alpha }(\mathrm{cl}\Omega ,{\mathbb{D}})$ denotes
$\left\{f\in \left( C^{m,\alpha} (\mathrm{cl}\Omega )\right)^{r}:\ f(
\mathrm{cl}\Omega )\subseteq {\mathbb{D}}\right\}$. \par
Now let $\Omega $ be a bounded
open subset of  ${\mathbb{R}}^{n}$. Then $C^{m}(\mathrm{cl}\Omega )$ 
and $C^{m,\alpha }({\mathrm{cl}}
\Omega )$ are endowed with their usual norm and are well known to be 
Banach spaces  (cf.~\textit{e.g.}, Troianiello~\cite[\S 1.2.1]{Tr87}.) 
We say that a bounded open subset $\Omega$ of ${\mathbb{R}}^{n}$ is of class 
$C^{m}$ or of class $C^{m,\alpha}$, if its closure is a 
manifold with boundary imbedded in 
${\mathbb{R}}^{n}$ of class $C^{m}$ or $C^{m,\alpha}$, respectively
 (cf.~\textit{e.g.}, Gilbarg and Trudinger~\cite[\S 6.2]{GiTr83}.) 
We denote by 
$
\nu_{\Omega}
$
the outward unit normal to $\partial\Omega$.  For standard properties of functions 
in Schauder spaces, we refer the reader to Gilbarg and 
Trudinger~\cite{GiTr83} and to Troianiello~\cite{Tr87}
(see also Lanza \cite[\S 2, Lem.~3.1, 4.26, Thm.~4.28]{La91},  Lanza and Rossi \cite[\S 2]{LaRo04}.) \par
 
If $\mathbb{M}$ is a manifold  imbedded in 
${\mathbb{R}}^{n}$ of class $C^{m,\alpha}$, with $m\geq 1$, 
$\alpha\in ]0,1]$, one can define the Schauder spaces also on $\mathbb{M}$ by 
exploiting the local parametrizations. In particular, one can consider 
the spaces $C^{k,\alpha}(\partial\Omega)$ on $\partial\Omega$ for 
$0\leq k\leq m$ with $\Omega$ a bounded open set of class $C^{m,\alpha}$,
and the trace operator of $C^{k,\alpha}({\mathrm{cl}}\Omega)$ to
$C^{k,\alpha}(\partial\Omega)$ is linear and continuous. We denote by $d\sigma$ the area element of a manifold imbedded in ${\mathbb{R}}^{n}$. \par

We note that 
throughout the paper ``analytic'' means ``real analytic''. For the 
definition and properties of real analytic operators, we refer to Prodi and 
Ambrosetti~\cite[p.~89]{PrAm73}. In particular, we mention that the 
pointwise product in Schauder spaces is bilinear and continuous, and 
thus analytic (cf.~\textit{e.g.}, Lanza and Rossi \cite[p.~141]{LaRo04}.)\par

For all bounded open subsets $\Omega$ of $\mathbb{R}^n$ and $\rho>0$, we set
\[
C_{\omega,\rho}^0(\mathrm{cl}\Omega) \equiv \Bigl\{u \in C^\infty(\mathrm{cl}\Omega)\colon \sup_{\beta \in \mathbb{N}^n}\frac{\rho^{|\beta|}}{|\beta|!}\|D^\beta u\|_{C^0(\mathrm{cl}\Omega)}<+\infty \Bigr\}\,,
\]
and 
\[
\|u\|_{C_{\omega,\rho}^0(\mathrm{cl}\Omega)}\equiv  \sup_{\beta \in \mathbb{N}^n}\frac{\rho^{|\beta|}}{|\beta|!}\|D^\beta u\|_{C^0(\mathrm{cl}\Omega)} \qquad \forall u \in C_{\omega,\rho}^0(\mathrm{cl}\Omega)\,,
\]
where $|\beta|\equiv \beta_1+\dots+\beta_n$, for all $\beta\equiv(\beta_1,\dots,\beta_n)\in \mathbb{N}^n$. As is well known, the Roumieu class $\bigl(C_{\omega,\rho}^0(\mathrm{cl}\Omega),\|\cdot\|_{C_{\omega,\rho}^0(\mathrm{cl}\Omega)}\bigr)$ is a Banach space. \par

A straightforward computation based on the inequality $\binom{\beta}{\alpha}\leq \binom{|\beta|}{|\alpha|}$, which holds for $\alpha$, $\beta \in \mathbb{N}^n$, $0\leq \alpha\leq \beta$, shows that the pointwise product is bilinear and continuous from $\bigl(C^0_{\omega,\rho}(\mathrm{cl}\Omega)\bigr)^2$ to $C^0_{\omega,\rho'}(\mathrm{cl}\Omega)$ for all $\rho' \in ]0,\rho[$.

Let $\mathbb{D}\subseteq \mathbb{R}^n$ be such that $qz+\mathbb{D}\subseteq \mathbb{D}$ for all $z \in \mathbb{Z}^n$. We say that a function $f$ from $\mathbb{D}$ to $\mathbb{R}$ is $q$--periodic if $f(x+qe_j)=f(x)$ for all $x \in \mathbb{D}$ and for all $j\in \{1,\dots,n\}$. If $k \in \mathbb{N}$, then we set
\[
C^{k}_{q}(\mathbb{R}^n)\equiv
\{
u\in C^{k}(\mathbb{R}^n):\,
u\ {\mathrm{is}}\ q-{\mathrm{periodic}}
\}\,.
\]
We also set
\[
C^{\infty}_{q}(\mathbb{R}^n)\equiv
\{
u\in C^{\infty}(\mathbb{R}^n):\,
u\ {\mathrm{is}}\ q-{\mathrm{periodic}}
\}\,.
\]
Similarly, if $\rho>0$, we set
\begin{equation}
\label{proum}
C_{q,\omega,\rho}^0(\mathbb{R}^n) \equiv \Bigl\{u \in C^\infty_q(\mathbb{R}^n)\colon \sup_{\beta \in \mathbb{N}^n}\frac{\rho^{|\beta|}}{|\beta|!}\|D^\beta u\|_{C^0(\mathrm{cl}Q)}<+\infty \Bigr\}\,,
\end{equation}
and 
\[
\|u\|_{C_{q,\omega,\rho}^0(\mathbb{R}^n)}\equiv  \sup_{\beta \in \mathbb{N}^n}\frac{\rho^{|\beta|}}{|\beta|!}\|D^\beta u\|_{C^0(\mathrm{cl}Q)} \qquad \forall u \in C_{q,\omega,\rho}^0(\mathbb{R}^n)\,.
\]
As can be easily seen, the periodic Roumieu class $\bigl(C_{q,\omega,\rho}^0(\mathbb{R}^n),\|\cdot\|_{C_{q,\omega,\rho}^0(\mathbb{R}^n)}\bigr)$ is a Banach space. Obviously, the restriction operator from $C_{q,\omega,\rho}^0(\mathbb{R}^n)$ to $C_{\omega,\rho}^0(\mathrm{cl}\mathbb{D})$ is linear and continuous for all bounded open subsets $\mathbb{D}$ of $\mathbb{R}^n$ and $\rho>0$. Similarly, if $\rho>0$, if $\mathbb{D}$ is a bounded open subset of $\mathbb{R}^n$ such that $\mathrm{cl}Q \subseteq \mathbb{D}$, and if $f \in C^0_q(\mathbb{R}^n)$ is such that $f_{|\mathrm{cl}\mathbb{D}} \in C^0_{\omega,\rho}(\mathrm{cl}\mathbb{D})$, then clearly $f \in C^0_{q,\omega,\rho}(\mathbb{R}^n)$. 

If $\Omega$ is an open subset of ${\mathbb{R}}^{n}$, $k\in {\mathbb{N}}$, $\beta\in]0,1]$, we set
\[
C^{k,\beta}_{b}({\mathrm{cl}}\Omega)\equiv
\{
u\in C^{k,\beta}({\mathrm{cl}}\Omega):\,
D^{\gamma}u\ {\mathrm{is\ bounded}}\ \forall\gamma\in {\mathbb{N}}^{n}\
{\mathrm{such\ that}}\ |\gamma|\leq k
\}\,,
\]
and we endow $C^{k,\beta}_{b}({\mathrm{cl}}\Omega)$ with its usual  norm
\[
\|u\|_{ C^{k,\beta}_{b}({\mathrm{cl}}\Omega) }\equiv
\sum_{|\gamma|\leq k}\sup_{x\in {\mathrm{cl}}\Omega }|D^{\gamma}u(x)|
+\sum_{|\gamma| = k}|D^{\gamma}u: {\mathrm{cl}}\Omega |_{\beta}
\qquad\forall u\in C^{k,\beta}_{b}({\mathrm{cl}}\Omega)\,,
\]
where $|D^{\gamma}u: {\mathrm{cl}}\Omega |_{\beta}$ denotes the $\beta$-H\"{o}lder constant of $D^{\gamma}u$.

Next we turn to periodic domains. If $\mathbb{I}$ is an arbitrary subset of ${\mathbb{R}}^{n}$  such that
 ${\mathrm{cl}}\mathbb{I}\subseteq Q$, then we set
\[
{\mathbb{S}} [\mathbb{I}]\equiv 
\bigcup_{z\in{\mathbb{Z}}^{n} }(qz+\mathbb{I})=q{\mathbb{Z}}^{n}+\mathbb{I}\,,\qquad{\mathbb{S}} [\mathbb{I}]^{-}\equiv {\mathbb{R}}^{n}\setminus{\mathrm{cl}}{\mathbb{S}} [\mathbb{I}]\,.
\]
We note that if $\mathbb{R}^n\setminus \mathrm{cl}\mathbb{I}$ is connected, then $\mathbb{S}[\mathbb{I}]^{-}$ is connected. 

If $\mathbb{I}$ is an open subset of ${\mathbb{R}}^{n}$  such that ${\mathrm{cl}}\mathbb{I}\subseteq Q$ and if $k\in {\mathbb{N}}$, $\beta\in]0,1]$, then we set
\[
C^{k,\beta}_{q}({\mathrm{cl}}{\mathbb{S}}[\mathbb{I}] )
\equiv\left\{
u\in C^{k,\beta}_b({\mathrm{cl}}{\mathbb{S}}[\mathbb{I}] ):\,
u\ {\mathrm{is}}\ q-{\mathrm{periodic}}
\right\}\,,
\]
which we regard as a Banach subspace of $C^{k,\beta}_{b}({\mathrm{cl}}{\mathbb{S}}[\mathbb{I}] )$, and
\[
C^{k,\beta}_{q}({\mathrm{cl}}{\mathbb{S}}[\mathbb{I}]^{-} )
\equiv\left\{
u\in C^{k,\beta}_b({\mathrm{cl}}{\mathbb{S}}[\mathbb{I}]^{-} ):\,
u\ {\mathrm{is}}\ q-{\mathrm{periodic}}
\right\}\,,
\]
which we regard as a Banach subspace of $C^{k,\beta}_{b}({\mathrm{cl}}{\mathbb{S}}[\mathbb{I}]^{-})$.  

The Laplace operator is well known to have  a $\{0\}$-analog of a $q$-periodic fundamental solution, \textit{i.e.}, a $q$-periodic tempered distribution $S_{q,n}$ such that
\[
\Delta S_{q,n}=\sum_{z\in {\mathbb{Z}}^{n}}\delta_{qz}-\frac{1}{\mathrm{meas}(Q)}\,,
\]
where $\delta_{qz}$ denotes the Dirac measure with mass in $qz$ (cf.~\textit{e.g.}, \cite[\S 3]{LaMu10a}.)  
As is well known, $S_{q,n}$ is determined  up to an additive constant, and we can take
\[
S_{q,n}(x)=-\sum_{ z\in {\mathbb{Z}}^{n}\setminus\{0\} }
\frac{1}{     \mathrm{meas}(Q)4\pi^{2}|q^{-1}z|^{2}   }e^{2\pi i (q^{-1}z)\cdot x}
\qquad{\mathrm{in}}\ {\mathcal{D}}'({\mathbb{R}}^{n})
\]
(cf.~\textit{e.g.}, Ammari and Kang \cite[p.~53]{AmKa07}, \cite[\S 3]{LaMu10a}.) 
Clearly $S_{q,n}$ is even.  Moreover, $S_{q,n}$ is real analytic in ${\mathbb{R}}^{n}\setminus q{\mathbb{Z}}^{n}$ and is locally integrable in ${\mathbb{R}}^{n}$.\par

Let
$S_{n}$ be the function from ${\mathbb{R}}^{n}\setminus\{0\}$ to ${\mathbb{R}}$ defined by
\[
S_{n}(x)\equiv
\left\{
\begin{array}{lll}
\frac{1}{s_{n}}\log |x| \qquad &   \forall x\in 
{\mathbb{R}}^{n}\setminus\{0\},\quad & {\mathrm{if}}\ n=2\,,
\\
\frac{1}{(2-n)s_{n}}|x|^{2-n}\qquad &   \forall x\in 
{\mathbb{R}}^{n}\setminus\{0\},\quad & {\mathrm{if}}\ n>2\,,
\end{array}
\right.
\]
where $s_{n}$ denotes the $(n-1)$ dimensional measure of 
$\partial{\mathbb{B}}_{n}$. $S_{n}$ is well-known to be the 
fundamental solution of the Laplace operator. \par

Then the function $S_{q,n}-S_{n}$ is analytic in $({\mathbb{R}}^{n}\setminus q{\mathbb{Z}}^{n})\cup\{0\}$ (cf.~\textit{e.g.}, Ammari and Kang \cite[Lemma~2.39, p.~54]{AmKa07}, \cite[\S 3]{LaMu10a}.) Then we find convenient to set
\[
R_n\equiv S_{q,n}-S_{n}\qquad{\mathrm{in}}\ 
({\mathbb{R}}^{n}\setminus q{\mathbb{Z}}^{n})\cup\{0\}\,.
\]

We now introduce the periodic simple layer potential. Let  $\alpha\in]0,1[$, $m\in {\mathbb{N}}\setminus\{0\}$. Let $\mathbb{I}$ be a bounded open subset of ${\mathbb{R}}^{n}$ of class $C^{m,\alpha}$ such that ${\mathrm{cl}}\mathbb{I}\subseteq Q$. If $\mu\in C^{0,\alpha}(\partial\mathbb{I})$, we set
\[
v_{q}[\partial\mathbb{I},\mu](x)\equiv
\int_{\partial\mathbb{I}}S_{q,n}(x-y)\mu(y)\,d\sigma_{y}
\qquad\forall x\in {\mathbb{R}}^{n}\,.
\]
As is well known, if $\mu\in C^{m-1,\alpha}(\partial\mathbb{I})$, then the function 
$v^{+}_{q}[\partial\mathbb{I},\mu]\equiv v_{q}[\partial\mathbb{I},\mu]_{|{\mathrm{cl}}{\mathbb{S}}[\mathbb{I}]}$ belongs to $C^{m,\alpha}_{q}({\mathrm{cl}}{\mathbb{S}}[\mathbb{I}])$, and the function 
$v^{-}_{q}[\partial\mathbb{I},\mu]\equiv v_{q}[\partial\mathbb{I},\mu]_{|{\mathrm{cl}}{\mathbb{S}}[\mathbb{I}]^{-}}$ belongs to $C^{m,\alpha}_{q}
({\mathrm{cl}}{\mathbb{S}}[\mathbb{I}]^{-})$. Similarly, we introduce the periodic double layer potential. If $\mu\in C^{0,\alpha}(\partial\mathbb{I})$, we set
\[
w_{q}[\partial\mathbb{I},\mu](x)\equiv
-\int_{\partial\mathbb{I}}(DS_{q,n}(x-y) )\nu_{\mathbb{I}}(y)\mu(y)\,d\sigma_{y}
\qquad\forall x\in {\mathbb{R}}^{n}\,.
\]
If $\mu\in C^{m,\alpha}(\partial\mathbb{I})$, then 
the restriction $w_{q}[\partial\mathbb{I},\mu]_{|{\mathbb{S}}[\mathbb{I}]}$ can be extended uniquely to an element $w^{+}_{ q }[\partial\mathbb{I},\mu]$ of $C_{q}^{m,\alpha}({\mathrm{cl}}{\mathbb{S}}[\mathbb{I}])$, and the  restriction $w_{q}[\partial\mathbb{I},\mu]_{|{\mathbb{S}}[\mathbb{I}]^{-}}$ can be extended uniquely to an element $w_{ q }^{-}[\partial\mathbb{I},\mu]$ of $C^{m,\alpha}_{q}({\mathrm{cl}}{\mathbb{S}}[\mathbb{I}]^{-})$, and we have
$w^{\pm }_{q}[\partial\mathbb{I},\mu]=\pm\frac{1}{2}\mu+
w_{q}[\partial\mathbb{I},\mu]$ on $\partial \mathbb{I}$. Moreover, if $\mu\in C^{0,\alpha}(\partial\mathbb{I})$, then we have
\begin{equation}\label{perpot}
w_{q}[\partial\mathbb{I},\mu](x)=-
\sum_{j=1}^{n}\frac{\partial}{\partial x_{j}}
\int_{\partial\mathbb{I}}  
S_{q,n}(x-y)(\nu_{\mathbb{I}}(y))_{j}\mu(y)\,d\sigma_{y}\, ,
\end{equation}
for all  $x\in{\mathbb{R}}^{n}\setminus\partial{\mathbb{S}}[\mathbb{I}]$ (cf.~\textit{e.g.}, \cite[\S 3]{LaMu10a}.)

As we have done for the periodic layer potentials, we now introduce a periodic analog of the Newtonian potential. If $f\in C^{0}_q(\mathbb{R}^n)$, then we set
\[
P_{q}[f](x)\equiv
\int_{Q}S_{q,n}(x-y)f(y)\,dy
\qquad\forall x\in {\mathbb{R}}^{n}\,.
\]
Clearly, $P_q[f]$ is a $q$--periodic function on $\mathbb{R}^n$. 

In the following Theorem, we collect some elementary properties of the periodic Newtonian potential. A proof can be effected by splitting $S_{q,n}$ into the sum of $S_n$ and $R_n$, and by exploiting the results of Lanza \cite{La05} on the (classical) Newtonian potential in the Roumieu classes and standard properties of integral  operators with real analytic kernels and with no singularity (cf.~\textit{e.g.}, \cite[\S 3]{LaMu10b} and \cite{DaLaMu11}.)
\begin{thm}\label{thm:newperpot}
The following statements hold.
\begin{enumerate}
\item[(i)] Let $\beta \in ]0,1]$. Let $f \in C^{0,\beta}_{q}(\mathbb{R}^n)$. Then $P_{q}[f] \in C^{2}_{q}(\mathbb{R}^n)$ and
\[
\Delta P_{q}[f](x)=f(x)-\frac{1}{\mathrm{meas}(Q)}\int_{Q}f(y)\,dy \qquad \forall x \in \mathbb{R}^n\,.
\]
\item[(ii)] Let $\rho>0$. Then there exists $\rho' \in ]0,\rho]$ such that $P_q[f] \in C^0_{q,\omega,\rho'}(\mathbb{R}^n)$ for all $f \in C^0_{q,\omega,\rho}(\mathbb{R}^n)$ and such that $P_q[\cdot]$ is linear and continuous from $C^0_{q,\omega,\rho}(\mathbb{R}^n)$ to $C^0_{q,\omega,\rho'}(\mathbb{R}^n)$.
\end{enumerate}
\end{thm}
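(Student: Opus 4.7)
The strategy is to exploit the splitting $S_{q,n}=S_n+R_n$ on $(\mathbb{R}^n\setminus q\mathbb{Z}^n)\cup\{0\}$, decomposing $P_q[f]=V[f]+W[f]$ where $V[f](x)\equiv\int_Q S_n(x-y)f(y)\,dy$ and $W[f](x)\equiv\int_Q R_n(x-y)f(y)\,dy$. The key geometric observation is that for $x\in\mathrm{int}\,Q$ and $y\in Q$ the difference $x-y$ avoids $q\mathbb{Z}^n\setminus\{0\}$; more generally, thanks to the $q$-periodicity of both $S_{q,n}$ and $f$, the domain of integration can be replaced by a translated fundamental cell centered at any chosen $x_0\in\mathbb{R}^n$, in which case the kernel $R_n(x-y)$ is jointly real analytic on a neighborhood of the compact region of integration when $x$ varies in a small neighborhood of $x_0$.

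For part (i), classical Newtonian-potential theory applied to $f\in C^{0,\beta}_q(\mathbb{R}^n)$ yields $V[f]\in C^2$ with $\Delta V[f]=f$ on $\mathrm{int}\,Q$. From $\Delta S_{q,n}=\sum_{z\in\mathbb{Z}^n}\delta_{qz}-\mathrm{meas}(Q)^{-1}$ and $\Delta S_n=\delta_0$ we read, first distributionally and hence as an identity of analytic functions on $(\mathbb{R}^n\setminus q\mathbb{Z}^n)\cup\{0\}$, that $\Delta R_n\equiv-\mathrm{meas}(Q)^{-1}$. Differentiating $W[f]$ under the integral sign (legitimate by the geometric observation) gives $\Delta W[f](x)=-\mathrm{meas}(Q)^{-1}\int_Q f(y)\,dy$. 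Summing, and invoking the cell-translation argument at any $x_0\in\mathbb{R}^n$, extends the identity $\Delta P_q[f](x)=f(x)-\mathrm{meas}(Q)^{-1}\int_Q f(y)\,dy$ and the $C^2_q$-regularity to all of $\mathbb{R}^n$; the $q$-periodicity of $P_q[f]$ follows from the periodicity of $S_{q,n}$ together with the invariance of $\int_Q$ under shifts by $q\mathbb{Z}^n$.

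For part (ii), I treat the two pieces separately. Lanza's result \cite{La05} on the classical Newtonian potential in the Roumieu classes supplies a $\rho_1\in\,]0,\rho]$ and a linear continuous map sending $f\in C^0_{\omega,\rho}(\mathrm{cl}\mathbb{D})$ to $V[f]\in C^0_{\omega,\rho_1}(\mathrm{cl}\mathbb{D})$, for a suitable bounded open $\mathbb{D}$ with $\mathrm{cl}Q\subseteq\mathbb{D}$; this composes continuously with the restriction $C^0_{q,\omega,\rho}(\mathbb{R}^n)\to C^0_{\omega,\rho}(\mathrm{cl}\mathbb{D})$. For $W[\cdot]$, the cell-translation reformulation makes the kernel real analytic with no singularity on a neighborhood of the compact region of integration, whence the standard estimates on integral operators with real-analytic kernels (cf.\ \cite[\S 3]{LaMu10b} and \cite{DaLaMu11}) produce a $\rho_2\in\,]0,\rho]$ with an analogous continuous map. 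Setting $\rho'\equiv\min\{\rho_1,\rho_2\}$, summing, and invoking the observation at the end of Section \ref{not} that passes from Roumieu regularity on $\mathrm{cl}\mathbb{D}$ back to membership in $C^0_{q,\omega,\rho'}(\mathbb{R}^n)$ (thanks to the $q$-periodicity of $P_q[f]$ from part (i)) delivers the stated continuity. The hardest step throughout is the bookkeeping around the singularities of $R_n$ at $q\mathbb{Z}^n\setminus\{0\}$; the $x_0$-centered cell is precisely the device that circumvents them while keeping the output periodic.
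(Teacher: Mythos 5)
Your proposal is correct and follows essentially the same route the paper indicates for this theorem: splitting $S_{q,n}=S_{n}+R_{n}$, applying classical Newtonian potential theory and Lanza's Roumieu-class results to the $S_{n}$ part, and treating the $R_{n}$ part as an integral operator with real analytic non-singular kernel. The cell-translation device you use to keep $x-y$ away from $q\mathbb{Z}^{n}\setminus\{0\}$ is the standard way to make the sketched argument rigorous near $\partial Q$, so there is nothing to object to.
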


Let $m\in {\mathbb{N}}\setminus\{0\}$, $\alpha\in]0,1[$.  If $\Omega$ is a bounded open subset of ${\mathbb{R}}^{n}$ of class $C^{m,\alpha}$, we find convenient to set
\[
C^{m,\alpha}(\partial \Omega)_0\equiv \left\{\phi \in C^{m,\alpha}(\partial \Omega)\colon \int_{\partial \Omega}\phi \, d\sigma=0\right\}\,.
\]
If  $\rho>0$, we also set
\begin{equation}\label{proum0}
C^{0}_{q,\omega,\rho}(\mathbb{R}^n)_0\equiv \left\{f \in C^{0}_{q,\omega,\rho}(\mathbb{R}^n)\colon \int_{Q}f \, dx=0\right\}\,.
\end{equation}
As the following Proposition shows, a periodic Dirichlet boundary value problem for the Poisson equation in the perforated domain $\mathbb{S}[\mathbb{I}]^{-}$ has a unique solution in $C^{m,\alpha}_q(\mathrm{cl}\mathbb{S}[\mathbb{I}]^{-})$, which can be represented as the sum of a periodic double layer potential, of a costant, and of a periodic Newtonian potential.
\begin{prop}\label{prop:Dirsol}
Let $m\in {\mathbb{N}}\setminus\{0\}$, $\alpha\in]0,1[$. Let $\rho>0$. Let $\mathbb{I}$ be a bounded connected open subset of ${\mathbb{R}}^{n}$ of class $C^{m,\alpha}$ such that 
${\mathbb{R}}^{n}\setminus{\mathrm{cl}}\mathbb{I}$ is connected and that ${\mathrm{cl}}\mathbb{I}\subseteq Q$. Let $\Gamma \in C^{m,\alpha}(\partial \mathbb{I})$. Let $f \in C^0_{q,\omega,\rho}(\mathbb{R}^n)_0$. Then the following boundary value problem
\[
 \left \lbrace 
 \begin{array}{ll}
 \Delta u (x)= f(x) & \textrm{$\forall x \in {\mathbb{S}} [\mathbb{I}]^{-}$}\,, \\
u\ {\mathrm{is}}\  $q$-{\mathrm{periodic\ in}}\  \mathrm{cl}{\mathbb{S}}[\mathbb{I}]^{-}\,,&\\
u(x)=\Gamma(x)& \textrm{$\forall x \in \partial \mathbb{I}$}\,,
 \end{array}
 \right.
\]
 has a unique solution $u \in C^{m,\alpha}_q(\mathrm{cl} \mathbb{S}[\mathbb{I}]^{-})$. Moreover,
\[
u(x)=w_{q}^{-}[\partial \mathbb{I},\mu](x)+\xi +P_q[f](x)\qquad \forall x \in \mathrm{cl}\mathbb{S}[\mathbb{I}]^{-}\,,
\]
where $(\mu,\xi)$ is the unique solution in $C^{m,\alpha}(\partial \mathbb{I})_0\times \mathbb{R}$ of the following integral equation
\[
-\frac{1}{2}\mu(x)+w_{q}[\partial \mathbb{I},\mu](x)+\xi=\Gamma(x)-P_q[f](x) \qquad \forall x \in \partial \mathbb{I}\, .
\]
\end{prop}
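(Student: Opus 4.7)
The plan is to reduce the periodic Poisson problem to an already-solved periodic homogeneous Dirichlet problem for the Laplace equation by subtracting off a particular solution built from the periodic Newtonian potential, and then to transcribe the boundary integral equation characterizing that reduced problem.

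First, I would exploit Theorem \ref{thm:newperpot}. Since $f\in C^{0}_{q,\omega,\rho}(\mathbb{R}^n)_0$, the function $f$ is $q$-periodic, $C^\infty$ (in fact real analytic), and has vanishing integral on $Q$; statement (i) of the theorem then ensures that $P_q[f]\in C^2_q(\mathbb{R}^n)$ with $\Delta P_q[f]=f$ pointwise on $\mathbb{R}^n$. By the real analyticity of $f$ and elliptic regularity for the Laplace operator, $P_q[f]$ is real analytic on $\mathbb{R}^n$, so in particular its restriction to $\partial\mathbb{I}$ lies in $C^{m,\alpha}(\partial\mathbb{I})$.

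Next I would perform the change of unknown $v\equiv u-P_q[f]$. A direct computation shows that $u\in C^{m,\alpha}_q(\mathrm{cl}\mathbb{S}[\mathbb{I}]^{-})$ solves the proposition's boundary value problem if and only if $v$ is a $q$-periodic element of $C^{m,\alpha}_q(\mathrm{cl}\mathbb{S}[\mathbb{I}]^{-})$, is harmonic in $\mathbb{S}[\mathbb{I}]^{-}$, and satisfies $v=\Gamma-P_q[f]$ on $\partial\mathbb{I}$. This is precisely the periodic homogeneous Dirichlet problem treated in \cite{Mu11}, whose main representation theorem asserts that for every boundary datum $\widetilde{\Gamma}\in C^{m,\alpha}(\partial\mathbb{I})$ the problem admits a unique solution, and this solution can be written uniquely in the form $v=w_q^{-}[\partial\mathbb{I},\mu]+\xi$ with $(\mu,\xi)\in C^{m,\alpha}(\partial\mathbb{I})_0\times\mathbb{R}$ the unique solution of
\[
-\tfrac{1}{2}\mu(x)+w_q[\partial\mathbb{I},\mu](x)+\xi=\widetilde{\Gamma}(x)\qquad\forall x\in\partial\mathbb{I}.
\]
Taking $\widetilde{\Gamma}\equiv \Gamma-P_q[f]_{|\partial\mathbb{I}}$ (which is legitimate by the previous paragraph) and then adding $P_q[f]$ back to $v$ yields the representation and the integral equation claimed in the proposition, while uniqueness of $u$ is inherited from the uniqueness of $v$.

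The only step that does not reduce to a direct quotation is verifying that the hypotheses of the result from \cite{Mu11} are met, which is immediate once one observes that $\mathbb{I}$ satisfies the required topological and regularity assumptions in the proposition. Should one prefer a self-contained proof, the real work is packaged inside that quoted result: uniqueness follows from an energy identity on $Q\setminus\mathrm{cl}\mathbb{I}$ in which the contributions on $\partial Q$ cancel by $q$-periodicity, whereas existence of $(\mu,\xi)$ follows from Fredholm theory applied to $(\mu,\xi)\mapsto -\tfrac{1}{2}\mu+w_q[\partial\mathbb{I},\mu]+\xi$ on $C^{m,\alpha}(\partial\mathbb{I})_0\times\mathbb{R}$, where the scalar $\xi$ is precisely what compensates the one-dimensional cokernel of $-\tfrac{1}{2}I+w_q[\partial\mathbb{I},\cdot]$ on the connected exterior, and the restriction $\int_{\partial\mathbb{I}}\mu\,d\sigma=0$ eliminates the corresponding kernel direction.
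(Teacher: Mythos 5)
Your proposal is correct and follows essentially the same route as the paper: the paper's proof is precisely to consider the difference $u-P_q[f]$, reduce to the homogeneous periodic Dirichlet problem solved in \cite{Mu11}, and invoke Theorem \ref{thm:newperpot} for the properties of $P_q[f]$ (the vanishing integral of $f$ guaranteeing $\Delta P_q[f]=f$). Your additional remarks on the boundary regularity of $P_q[f]$ and on the uniqueness/Fredholm structure behind the quoted result from \cite{Mu11} are consistent with that reference and only make explicit what the paper leaves implicit.
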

 \begin{proof} A proof can be effected by considering the difference $u- P_q[f]$ and by solving the corresponding homogeneous problem (cf.~\cite[\S 2]{Mu11} and Theorem \ref{thm:newperpot}.) 
\end{proof}

\section{Formulation and analysis of an auxiliary problem}\label{form}

We shall consider the following assumptions for some $\alpha \in ]0,1[$ and for some natural $m \geq 1$.
\begin{equation}\label{ass} 
\begin{split}
&\text{Let $\Omega$ be a bounded connected open subset of ${\mathbb{R}}^{n}$ of class $C^{m,\alpha}$ }\\ 
&\text{such that ${\mathbb{R}}^{n}\setminus{\mathrm{cl}}\Omega$ is connected and that $0 \in\Omega$}.\\
&
\text{Let $p \in Q$}. 
\end{split}
\end{equation}
If $\epsilon \in \mathbb{R}$ and \eqref{ass} holds, we set
\[
\Omega_{p,\epsilon} \equiv p+\epsilon\Omega \,.
\]
Now let $\epsilon_0$ be such that
\begin{equation}\label{eps0}
\epsilon_0>0 \quad \mathrm{and} \quad \mathrm{cl}\Omega_{p,\epsilon} \subseteq Q\quad \forall \epsilon \in ]-\epsilon_0,\epsilon_0[\,.
\end{equation}
A simple topological argument shows that if \eqref{ass} holds, then $\mathbb{S}[\Omega_{p,\epsilon}]^{-}$ is connected, for all $\epsilon \in ]-\epsilon_0,\epsilon_0[$. We also note that
\[
\nu_{\Omega_{p,\epsilon}}(p+\epsilon t)=\mathrm{sgn}(\epsilon)\nu_{\Omega}(t) \qquad \forall t \in \partial \Omega\, ,
\]
for all $\epsilon \in ]-\epsilon_0,\epsilon_0[\setminus \{0\}$, where $\mathrm{sgn}(\epsilon)=1$ if $\epsilon>0$, $\mathrm{sgn}(\epsilon)=-1$ if $\epsilon<0$. Let $\rho>0$. Then we shall consider also the following assumptions.
\begin{equation}\label{assg}
\text{Let $g_0 \in C^{m,\alpha}(\partial \Omega)$.}
\end{equation}
\begin{equation}\label{assf}
\text{Let $f_0 \in C^{0}_{q,\omega,\rho}(\mathbb{R}^n)_0$.}
\end{equation}
If $(\epsilon,g,f) \in ]0,\epsilon_0[\times C^{m,\alpha}(\partial \Omega)\times C^0_{q,\omega,\rho}(\mathbb{R}^n)_0$, then we denote by $u[\epsilon,g,f]$ the unique solution in $C^{m,\alpha}_q(\mathrm{cl}\mathbb{S}[\Omega_{p,\epsilon}]^-)$ of problem \eqref{bvp:Direps}, and by $u_\#[\epsilon,g,f]$ the unique solution in $C^{m,\alpha}_q(\mathrm{cl}\mathbb{S}[\Omega_{p,\epsilon}]^-)$ of the following auxiliary boundary value problem
 \begin{equation}\label{bvp:auxDireps}
 \left \lbrace 
 \begin{array}{ll}
 \Delta u (x)= 0 & \textrm{$\forall x \in {\mathbb{S}} [\Omega_{p,\epsilon}]^{-}$}\,, \\
u\ {\mathrm{is}}\  $q$-{\mathrm{periodic\ in}}\  \mathrm{cl}{\mathbb{S}}[\Omega_{p,\epsilon}]^{-}\,,&\\
u(x)=g\bigl((x-p)/\epsilon\bigr) -P_q[f](x)& \textrm{$\forall x \in \partial \Omega_{p,\epsilon}$}\,.
 \end{array}
 \right.
 \end{equation}
 Clearly,
\begin{equation}\label{eq:sum}
u[\epsilon,g,f]= u_\#[\epsilon,g,f]+P_q[f] \qquad \text{on $\mathrm{cl}\mathbb{S}[\Omega_{p,\epsilon}]^-$}\,.
\end{equation}
Let $f \in C^0_{q,\omega,\rho}(\mathbb{R}^n)$, $\epsilon \in ]0,\epsilon_0[$. We note that
\[
P_q[f](x)=P_q[f]\Bigl(p+\epsilon\bigr((x-p)/\epsilon\bigr)\Bigr) \qquad \forall x \in \partial \Omega_{p,\epsilon}\, .
\] 
Accordingly, the Dirichlet condition in problem \eqref{bvp:auxDireps} can be rewritten as
\[
u(x)=\Bigl(g-P_q[f]\circ(p+\epsilon \mathrm{id}_{\partial \Omega})\Bigr)\bigl((x-p)/\epsilon\bigr)  \qquad \forall x \in \partial \Omega_{p,\epsilon}\,,
\]
where $\mathrm{id}_{\partial \Omega}$ denotes the identity map in $\partial \Omega$. As a consequence, in order to study the dependence of $u_\#[\epsilon,g,f]$ upon $(\epsilon,g,f)$ around $(0,g_0,f_0)$, we can exploit the results of \cite{Mu11}, concerning the dependence of the solution of the Dirichlet problem for the Laplace equation upon $\epsilon$ and the Dirichlet datum. In order to do so, we need to study the regularity of the map from $]-\epsilon_0,\epsilon_0[\times C^0_{q,\omega,\rho}(\mathbb{R}^n)$ to $C^{m,\alpha}(\partial \Omega)$ which takes $(\epsilon,f)$ to the function $P_q[f]\circ (p+\epsilon \mathrm{id}_{\partial \Omega})$.
\begin{lem}\label{lem:Peps}
Let $m\in {\mathbb{N}}\setminus\{0\}$, $\alpha\in]0,1[$. Let $\rho>0$. Let \eqref{ass}--\eqref{assf} hold. Let $\mathrm{id}_{\partial \Omega}$, $\mathrm{id}_{\mathrm{cl\Omega}}$ denote the identity map in $\partial \Omega$ and in $\mathrm{cl}\Omega$, respectively. Then the following statements hold.
\begin{enumerate}
\item[(i)] The map from $]-\epsilon_0,\epsilon_0[\times C^{0}_{q,\omega,\rho}(\mathbb{R}^n)$ to $C^{m,\alpha}(\mathrm{cl} \Omega)$ which takes $(\epsilon,f)$ to $P_q[f]\circ(p+\epsilon \mathrm{id}_{\mathrm{cl} \Omega})$ is real analytic.
\item[(ii)] The map from $]-\epsilon_0,\epsilon_0[\times C^{0}_{q,\omega,\rho}(\mathbb{R}^n)$ to $C^{m,\alpha}(\partial \Omega)$ which takes $(\epsilon,f)$ to $P_q[f]\circ(p+\epsilon \mathrm{id}_{\partial \Omega})$ is real analytic.
\end{enumerate}
\end{lem}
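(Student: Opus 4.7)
My plan is to reduce (i) to the composition of three analytic maps: the Newtonian potential operator, a restriction map, and a composition operator of Preciso type. Then (ii) follows from (i) by applying the trace operator.

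First I would apply Theorem \ref{thm:newperpot}(ii) to get $\rho' \in ]0,\rho]$ such that $P_q[\cdot]\colon C^0_{q,\omega,\rho}(\mathbb{R}^n)\to C^0_{q,\omega,\rho'}(\mathbb{R}^n)$ is linear and continuous. Next I would choose a bounded open set $\mathbb{D}\subseteq \mathbb{R}^n$ with $\mathrm{cl}Q\subseteq \mathbb{D}$ and such that $p+\epsilon\,\mathrm{cl}\Omega\subseteq \mathbb{D}$ for every $\epsilon\in[-\epsilon_0,\epsilon_0]$ (for instance, a suitable $\delta$-neighborhood of $\mathrm{cl}Q\cup\{p+\epsilon x:\epsilon\in[-\epsilon_0,\epsilon_0],\ x\in\mathrm{cl}\Omega\}$). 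By the remark following \eqref{proum}, the restriction $u\mapsto u_{|\mathrm{cl}\mathbb{D}}$ is linear and continuous from $C^0_{q,\omega,\rho'}(\mathbb{R}^n)$ to $C^0_{\omega,\rho'}(\mathrm{cl}\mathbb{D})$, so the composed map $f\mapsto P_q[f]_{|\mathrm{cl}\mathbb{D}}$ is linear and continuous, hence real analytic, from $C^0_{q,\omega,\rho}(\mathbb{R}^n)$ to $C^0_{\omega,\rho'}(\mathrm{cl}\mathbb{D})$.

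The crux of the argument is then to prove that the composition operator
\[
T\colon\; ]-\epsilon_0,\epsilon_0[\,\times\, C^0_{\omega,\rho'}(\mathrm{cl}\mathbb{D}) \;\longrightarrow\; C^{m,\alpha}(\mathrm{cl}\Omega),\qquad T(\epsilon,F)\equiv F\circ(p+\epsilon\,\mathrm{id}_{\mathrm{cl}\Omega}),
\]
is real analytic. This is where I would invoke the Preciso-type result quoted in the Appendix of the paper: composition of a Roumieu-class function with a Schauder-class inner function is analytic jointly in the two arguments, provided the image of the inner function stays in the domain of the outer one. The map $\epsilon\mapsto p+\epsilon\,\mathrm{id}_{\mathrm{cl}\Omega}$ is affine, hence real analytic, from $]-\epsilon_0,\epsilon_0[$ into $C^{m,\alpha}(\mathrm{cl}\Omega,\mathbb{R}^n)$, and our choice of $\mathbb{D}$ ensures $(p+\epsilon\,\mathrm{id}_{\mathrm{cl}\Omega})(\mathrm{cl}\Omega)\subseteq \mathrm{cl}\mathbb{D}$ for all $\epsilon\in]-\epsilon_0,\epsilon_0[$. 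Composing with $T$ we conclude that $(\epsilon,F)\mapsto F\circ (p+\epsilon\,\mathrm{id}_{\mathrm{cl}\Omega})$ is real analytic. Combining with the analyticity of $f\mapsto P_q[f]_{|\mathrm{cl}\mathbb{D}}$ established above yields statement (i).

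For statement (ii) I would simply post-compose with the trace operator from $C^{m,\alpha}(\mathrm{cl}\Omega)$ to $C^{m,\alpha}(\partial\Omega)$, which is linear and continuous and therefore real analytic. The main obstacle, as anticipated, is the verification of the composition hypothesis: one needs to track the Roumieu radius $\rho'$ through the restriction and choose $\mathbb{D}$ so that $p+\epsilon\,\mathrm{cl}\Omega$ remains strictly inside $\mathrm{cl}\mathbb{D}$ uniformly in $\epsilon$, so that the Preciso composition result applies. Everything else (linearity of $P_q$, continuity of restriction and trace, affine dependence of $p+\epsilon\,\mathrm{id}_{\mathrm{cl}\Omega}$ on $\epsilon$) is routine.
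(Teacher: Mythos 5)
Your proof is correct and follows essentially the same route as the paper: Theorem \ref{thm:newperpot} (ii) plus restriction, then the Preciso-type composition result of Proposition \ref{prop:Pr}, then the trace operator for (ii). The only cosmetic difference is that the paper restricts directly to $\mathrm{cl}Q$ rather than to an enlarged set $\mathbb{D}$, since condition \eqref{eps0} already guarantees $p+\epsilon\,\mathrm{cl}\Omega\subseteq Q$ for all $|\epsilon|<\epsilon_0$, so that $p+\epsilon\,\mathrm{id}_{\mathrm{cl}\Omega}\in C^{m,\alpha}(\mathrm{cl}\Omega,Q)$ and the hypothesis of Proposition \ref{prop:Pr} holds without any auxiliary neighborhood.
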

 \begin{proof} We first prove statement (i). By Theorem \ref{thm:newperpot} (ii), there exists $\rho'\in ]0,\rho]$ such that the linear map from $C^0_{q,\omega,\rho}(\mathbb{R}^n)$ to $C^0_{\omega,\rho'}(\mathrm{cl}Q)$ which takes $f$ to $P_q[f]_{|\mathrm{cl}Q}$ is continuous. As a consequence, by Proposition \ref{prop:Pr} of the Appendix, we immediately deduce that the map from $]-\epsilon_0,\epsilon_0[\times C^{0}_{q,\omega,\rho}(\mathbb{R}^n)$ to $C^{m,\alpha}(\mathrm{cl} \Omega)$ which takes $(\epsilon,f)$ to $P_q[f]\circ(p+\epsilon \mathrm{id}_{\mathrm{cl} \Omega})$ is real analytic. By the continuity of the trace operator from $C^{m,\alpha}(\mathrm{cl}\Omega)$ to $C^{m,\alpha}(\partial \Omega)$ and statement (i), we deduce the validity of (ii).
\end{proof}

Then we have the following Lemma  (cf.~\cite[\S 3]{Mu11}.)

\begin{lem}\label{lem:limeq}
Let $m\in {\mathbb{N}}\setminus\{0\}$, $\alpha\in]0,1[$. Let $\rho>0$. Let \eqref{ass}--\eqref{assf} hold. Let $\tau_0$ be the unique solution in $C^{m-1,\alpha}(\partial \Omega)$ of the following problem
\[  
\left \lbrace 
 \begin{array}{ll}
-\frac{1}{2}\tau(t)+\int_{\partial\Omega}(DS_{n}(t-s) )\nu_{\Omega}(t)\tau(s)\,d\sigma_{s}=0 \qquad \forall t \in \partial \Omega\, ,\\
\int_{\partial \Omega}\tau \, d\sigma=1\, . & 
 \end{array}
 \right.
\]
Then equation
\[
-\frac{1}{2}\theta(t)-\int_{\partial\Omega}(DS_{n}(t-s) )\nu_{\Omega}(s)\theta(s)\,d\sigma_{s}+\xi=g_0(t)-P_q[f_0](p)\qquad \forall t \in \partial \Omega\, ,
\]
which we call the \emph{limiting equation}, has a unique solution in $C^{m,\alpha}(\partial \Omega)_0\times \mathbb{R}$, which we denote by $(\tilde{\theta},\tilde{\xi})$. Moreover, 
\[
\tilde{\xi}=\int_{\partial \Omega}g_0\tau_0\, d\sigma-P_q[f_0](p)\,,
\]
and the function $\tilde{u} \in C^{m,\alpha}_{\mathrm{loc}}(\mathbb{R}^n \setminus \mathrm{cl}\Omega)$, defined by
\[
\tilde{u}(t)\equiv -\int_{\partial \Omega}(DS_n(t-s))\nu_{\Omega}(s)\tilde{\theta}(s)d\sigma_s \qquad \forall t \in \mathbb{R}^n \setminus \mathrm{cl}\Omega\, ,
\]
has a unique continuous extension to $\mathbb{R}^{n}\setminus \Omega$, which we still denote by $\tilde{u}$, and such an extension is the unique solution in $C^{m,\alpha}_{\mathrm{loc}}(\mathbb{R}^n\setminus \Omega)$ of the following problem
\[
 \left \lbrace 
 \begin{array}{ll}
 \Delta u (t)= 0 & \textrm{$\forall t \in \mathbb{R}^n \setminus \mathrm{cl}\Omega$}\,, \\
u(t)=g_0(t)-\int_{\partial \Omega}g_0\tau_0\, d\sigma&  \textrm{$\forall t \in \partial \Omega$}\,, \\
\lim_{t\to \infty}u(t)=0\,.&
 \end{array}
 \right.
\]
\end{lem}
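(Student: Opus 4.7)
My plan is to reduce the entire statement to the analog for the homogeneous Dirichlet problem already treated in \cite[\S 3]{Mu11}. The key observation is that $P_q[f_0](p)$ is a \emph{constant} (since $f_0$ and $p$ are fixed), so $g_0 - P_q[f_0](p) \in C^{m,\alpha}(\partial\Omega)$, and the limiting equation has the same form as the one in \cite{Mu11} with this modified Dirichlet datum in place of $g_0$. Accordingly, the existence and representation statements of \cite{Mu11} apply essentially verbatim.

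Concretely, I would introduce the operator $L_0\colon C^{m,\alpha}(\partial\Omega)\to C^{m,\alpha}(\partial\Omega)$ defined by $L_0\theta \equiv -\tfrac{1}{2}\theta + w[\partial\Omega,\theta]_{|\partial\Omega}$, which coincides with the exterior boundary trace $w^{-}[\partial\Omega,\theta]$ and is a compact perturbation of $-\tfrac{1}{2}I$, hence Fredholm of index zero. A classical argument (if $w^{-}[\partial\Omega,\theta]=0$ on $\partial\Omega$, then by the exterior maximum principle and decay at infinity the double layer $w[\partial\Omega,\theta]$ vanishes in $\mathbb{R}^n\setminus\mathrm{cl}\Omega$; continuity of its normal derivative across $\partial\Omega$ then forces $\theta$ to be constant) identifies $\ker L_0$ with the one-dimensional space of constants. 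Its $L^2$-adjoint $L_0^{\ast}$ likewise has one-dimensional kernel, generated, after the normalization $\int_{\partial\Omega}\tau\,d\sigma=1$, by the unique element $\tau_0$ of the lemma. Since $\int_{\partial\Omega}\tau_0\,d\sigma=1\neq 0$, the constant function $1$ fails to lie in $\mathrm{range}(L_0)$, so the augmented map $(\theta,\xi)\mapsto L_0\theta+\xi$ from $C^{m,\alpha}(\partial\Omega)_0\times\mathbb{R}$ to $C^{m,\alpha}(\partial\Omega)$ is bijective, yielding existence and uniqueness of $(\tilde\theta,\tilde\xi)$. This Fredholm step, together with the dimensional bookkeeping that matches the kernel normalization to the cokernel, is the main technical point.

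To compute $\tilde\xi$, I pair the limiting equation against $\tau_0$ in the $L^2$ duality: since $L_0^{\ast}\tau_0=0$ yields $\int_{\partial\Omega}L_0[\tilde\theta]\,\tau_0\,d\sigma=0$, one obtains
\[
\tilde\xi\int_{\partial\Omega}\tau_0\,d\sigma=\int_{\partial\Omega}\bigl(g_0-P_q[f_0](p)\bigr)\tau_0\,d\sigma\,,
\]
and the stated formula follows from $\int_{\partial\Omega}\tau_0\,d\sigma=1$. Finally, set $\tilde u(t)\equiv w[\partial\Omega,\tilde\theta](t)$ for $t\in\mathbb{R}^n\setminus\mathrm{cl}\Omega$: classical regularity of the double layer on $C^{m,\alpha}$ surfaces provides the continuous extension to $\mathbb{R}^n\setminus\Omega$, with boundary trace
\[
w^{-}[\partial\Omega,\tilde\theta]=g_0-P_q[f_0](p)-\tilde\xi=g_0-\int_{\partial\Omega}g_0\tau_0\,d\sigma\,;
\]
harmonicity off the surface is immediate, and the standard multipole estimate $|w[\partial\Omega,\tilde\theta](t)|=O(|t|^{-(n-1)})$ as $|t|\to\infty$ delivers $\lim_{t\to\infty}\tilde u(t)=0$. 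Uniqueness within $C^{m,\alpha}_{\mathrm{loc}}(\mathbb{R}^n\setminus\Omega)$ is the classical exterior Dirichlet uniqueness via the maximum principle (the decay at infinity handling the planar case).
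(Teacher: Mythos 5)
Your proof is correct and follows essentially the route the paper intends: the paper states this lemma without proof, citing \cite[\S 3]{Mu11}, and your opening observation --- that $P_q[f_0](p)$ is a constant, so the limiting equation is exactly the one of \cite{Mu11} with Dirichlet datum $g_0-P_q[f_0](p)$ --- is precisely the reduction that citation relies on. The Fredholm/potential-theoretic argument you then supply (kernel of the exterior double-layer trace equal to the constants, cokernel spanned by the normalized equilibrium density $\tau_0$, pairing against $\tau_0$ to extract $\tilde{\xi}$, and the double-layer representation of $\tilde{u}$ with the standard decay and uniqueness statements) is the classical proof underlying the cited result, so nothing is missing.
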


In \cite{Mu11}, we have shown that the solutions of a periodic Dirichlet problem for the Laplace equation in $\mathbb{S}[\Omega_{p,\epsilon}]^-$ depend analytically upon $\epsilon$ and upon (a rescaling of) the Dirichlet datum. By Lemma \ref{lem:Peps} (ii), we know that (a rescaling of) the Dirichlet datum of the auxiliary problem \eqref{bvp:auxDireps} depends analytically upon $(\epsilon,g,f)$. Then we deduce that the solution of problem \eqref{bvp:auxDireps} depends analytically on
$(\epsilon, g,f)$, and we have the following.

\begin{prop}\label{prop:Lambda}
Let $m\in {\mathbb{N}}\setminus\{0\}$, $\alpha\in]0,1[$. Let $\rho>0$. Let \eqref{ass}--\eqref{assf} hold. Then there exist $\epsilon_1 \in ]0,\epsilon_0]$, an open neighborhood $\mathcal{U}$ of $g_0$ in $C^{m,\alpha}(\partial \Omega)$, an open neighborhood $\mathcal{V}$ of $f_0$ in $C^{0}_{q,\omega,\rho}(\mathbb{R}^n)_0$, and a real analytic map $(\Theta[\cdot,\cdot,\cdot],\Xi[\cdot,\cdot,\cdot])$ from $]-\epsilon_1,\epsilon_1[\times \mathcal{U}\times \mathcal{V}$ to $C^{m,\alpha}(\partial \Omega)_0\times \mathbb{R}$ such that
\[
u_\#[\epsilon,g,f]=w_q^-[\partial \Omega_{p,\epsilon},\Theta[\epsilon,g,f]((\cdot-p)/\epsilon)]+\Xi[\epsilon,g,f] \qquad \text{on $\mathrm{cl}\mathbb{S}[\Omega_{p,\epsilon}]^-$}\,,
\]
for all $(\epsilon,g,f)\in ]0,\epsilon_1[\times \mathcal{U}\times \mathcal{V}$. Moreover, $(\Theta[0,g_0,f_0],\Xi[0,g_0,f_0])=(\tilde{\theta},\tilde{\xi})$, where $\tilde{\theta}$, $\tilde{\xi}$ are as in Lemma \ref{lem:limeq}.
\end{prop}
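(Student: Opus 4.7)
The plan is to convert problem \eqref{bvp:auxDireps} into a pure Dirichlet problem for the Laplace equation with a Dirichlet datum depending analytically on $(\epsilon, g, f)$, and then to apply the results of \cite{Mu11} on the homogeneous periodic Dirichlet problem. For $(\epsilon, g, f) \in ]-\epsilon_0, \epsilon_0[ \times C^{m,\alpha}(\partial \Omega) \times C^0_{q,\omega,\rho}(\mathbb{R}^n)_0$ I would introduce the rescaled boundary datum
\[
G[\epsilon, g, f] \equiv g - P_q[f] \circ (p + \epsilon\,\mathrm{id}_{\partial \Omega}) \in C^{m,\alpha}(\partial \Omega)\,.
\]
Using $P_q[f](x) = P_q[f]\bigl(p + \epsilon (x-p)/\epsilon\bigr)$ for $x \in \partial \Omega_{p,\epsilon}$, the Dirichlet condition in \eqref{bvp:auxDireps} rewrites as $u(x) = G[\epsilon, g, f]\bigl((x-p)/\epsilon\bigr)$ on $\partial \Omega_{p,\epsilon}$. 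By Lemma \ref{lem:Peps}(ii), the map $G[\cdot, \cdot, \cdot]$ is real analytic, and $G[0, g_0, f_0](t) = g_0(t) - P_q[f_0](p)$ for all $t \in \partial \Omega$, which is precisely the right-hand side of the limiting equation in Lemma \ref{lem:limeq}.

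Next I would invoke the representation result of \cite{Mu11} for the periodic Dirichlet problem for the Laplace equation in $\mathbb{S}[\Omega_{p,\epsilon}]^-$: there exist $\epsilon^* \in ]0, \epsilon_0]$, an open neighborhood $\mathcal{W}$ of $g_0 - P_q[f_0](p)$ in $C^{m,\alpha}(\partial \Omega)$, and a real analytic map
\[
(\Theta_\#, \Xi_\#) \colon\, ]-\epsilon^*, \epsilon^*[\,\times\,\mathcal{W} \to C^{m,\alpha}(\partial \Omega)_0 \times \mathbb{R}
\]
such that, for every $(\epsilon, h) \in ]0, \epsilon^*[\,\times\,\mathcal{W}$, the unique $C^{m,\alpha}_q(\mathrm{cl}\mathbb{S}[\Omega_{p,\epsilon}]^-)$-solution of the Laplace Dirichlet problem with boundary datum $h((\cdot-p)/\epsilon)$ on $\partial \Omega_{p,\epsilon}$ is given by $w_q^-[\partial \Omega_{p,\epsilon}, \Theta_\#[\epsilon, h]((\cdot - p)/\epsilon)] + \Xi_\#[\epsilon, h]$, and such that $(\Theta_\#, \Xi_\#)[0, g_0 - P_q[f_0](p)] = (\tilde{\theta}, \tilde{\xi})$ with $\tilde{\theta}, \tilde{\xi}$ as in Lemma \ref{lem:limeq}.

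By continuity of $G$ at $(0, g_0, f_0)$, I would then pick $\epsilon_1 \in ]0, \epsilon^*]$ and open neighborhoods $\mathcal{U}$ of $g_0$ in $C^{m,\alpha}(\partial \Omega)$ and $\mathcal{V}$ of $f_0$ in $C^{0}_{q,\omega,\rho}(\mathbb{R}^n)_0$ with $G(]-\epsilon_1, \epsilon_1[\,\times\,\mathcal{U}\,\times\,\mathcal{V}) \subseteq \mathcal{W}$, and set
\[
(\Theta, \Xi)[\epsilon, g, f] \equiv (\Theta_\#, \Xi_\#)\bigl[\epsilon,\, G[\epsilon, g, f]\bigr]\,,
\]
a composition of real analytic maps, hence real analytic. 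The representation formula for $u_\#[\epsilon, g, f]$ then follows by uniqueness applied to the auxiliary problem \eqref{bvp:auxDireps} with Dirichlet datum $G[\epsilon, g, f]((\cdot - p)/\epsilon)$, and the identification $(\Theta[0, g_0, f_0], \Xi[0, g_0, f_0]) = (\tilde{\theta}, \tilde{\xi})$ is immediate from $G[0, g_0, f_0] = g_0 - P_q[f_0](p)$.

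The main obstacle is aligning the \cite{Mu11} framework---which provides analytic dependence of the density $\Theta_\#$ and the additive constant $\Xi_\#$ on $\epsilon$ and on a Dirichlet datum varying in a Banach-space neighborhood---with the three-parameter setting of \eqref{bvp:auxDireps}; this is overcome by the analyticity of $G$ afforded by Lemma \ref{lem:Peps}(ii) together with the observation that the $\epsilon = 0$ limit of the rescaled datum $G[\epsilon, g_0, f_0]$ is the constant-shift $g_0 - P_q[f_0](p)$, which is exactly the base point at which the limiting integral equation of Lemma \ref{lem:limeq} is solved by $(\tilde{\theta}, \tilde{\xi})$.
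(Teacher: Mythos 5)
Your proposal is correct and follows exactly the route the paper intends: rewrite the Dirichlet datum of the auxiliary problem \eqref{bvp:auxDireps} as $\bigl(g-P_q[f]\circ(p+\epsilon\,\mathrm{id}_{\partial\Omega})\bigr)\bigl((\cdot-p)/\epsilon\bigr)$, use Lemma \ref{lem:Peps}~(ii) for its analyticity in $(\epsilon,g,f)$, and compose with the analytic density/constant pair furnished by the representation theorem of \cite{Mu11} based at the limiting datum $g_0-P_q[f_0](p)$. This is precisely the (sketched) argument preceding Proposition \ref{prop:Lambda} in the paper, so nothing further is needed.
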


Then we have the following representation Theorem for $u_\#[\cdot,\cdot,\cdot]$.

\begin{thm}\label{thm:repaux}
Let $m\in {\mathbb{N}}\setminus\{0\}$, $\alpha\in]0,1[$. Let $\rho>0$. Let \eqref{ass}--\eqref{assf} hold. Let $\epsilon_1$, $\mathcal{U}$, $\mathcal{V}$, $\Xi$ be as in Proposition \ref{prop:Lambda}. Then the following statements hold.
\begin{enumerate}
\item[(i)] Let $V$ be a bounded open subset of $\mathbb{R}^n$ such that $\mathrm{cl}V \subseteq \mathbb{R}^n \setminus (p+q\mathbb{Z}^n)$. Let $r \in \mathbb{N}$. Then there exist $\epsilon_{\#,2} \in ]0,\epsilon_1]$ and a real analytic map $U_{\#}$ from $]-\epsilon_{\#,2},\epsilon_{\#,2}[\times \mathcal{U}\times \mathcal{V}$ to $C^{r}(\mathrm{cl}V)$ such that the following statements hold.
\begin{enumerate}
\item[(j)] $\mathrm{cl}V \subseteq \mathbb{S}[\Omega_{p,\epsilon}]^{-}$ for all $\epsilon \in ]-\epsilon_{\#,2},\epsilon_{\#,2}[$.
\item[(jj)]
\[
u_\#[\epsilon,g,f](x)=\epsilon^{n-1}U_{\#}[\epsilon,g,f](x)+\Xi[\epsilon,g,f] \qquad \forall x \in \mathrm{cl}V\,,
\]
for all $(\epsilon,g,f) \in ]0,\epsilon_{\#,2}[\times \mathcal{U}\times \mathcal{V}$. Moreover,
\begin{align}
U_{\#}[0,g_0,f_0](x)=&-\int_{\partial \Omega}(DS_{q,n}(x-p))\nu_{\Omega}(s)\tilde{\theta}(s)\,d\sigma_s \nonumber\\=&DS_{q,n}(x-p)\int_{\partial \Omega}\nu_{\Omega}(s)\tilde{u}(s)\,d\sigma_s\nonumber\\
&-DS_{q,n}(x-p)\int_{\partial \Omega}s\frac{\partial \tilde{u}}{\partial \nu_{\Omega}}(s)\,d\sigma_s\qquad \forall x \in \mathrm{cl}V\,,\nonumber
\end{align}
where $\tilde{\theta}$, $\tilde{u}$ are as in Lemma \ref{lem:limeq}.
\end{enumerate}
\item[(ii)] Let $\widetilde{V}$ be a bounded open subset of $\mathbb{R}^n \setminus \mathrm{cl}\Omega$. Then there exist $\tilde{\epsilon}_{\#,2} \in ]0,\epsilon_1]$ and a real analytic map $\widetilde{U}_{\#}$ from $]-\tilde{\epsilon}_{\#,2},\tilde{\epsilon}_{\#,2}[\times \mathcal{U}\times \mathcal{V}$ to $C^{m,\alpha}(\mathrm{cl}\widetilde{V})$ such that the following statements hold.
\begin{enumerate}
\item[(j')] $p+\epsilon\mathrm{cl}\widetilde{V}\subseteq Q\setminus \Omega_{p,\epsilon}$ for all $\epsilon \in ]-\tilde{\epsilon}_{\#,2},\tilde{\epsilon}_{\#,2}[\setminus \{0\}$.
\item[(jj')]
\[
u_\#[\epsilon,g,f](p+\epsilon t)=\widetilde{U}_{\#}[\epsilon,g,f](t)+\Xi[\epsilon,g,f]\qquad \forall t \in \mathrm{cl}\widetilde{V}\,,
\]
for all $(\epsilon,g,f) \in ]0,\tilde{\epsilon}_{\#,2}[\times \mathcal{U}\times \mathcal{V}$. Moreover,
\[
\widetilde{U}_{\#}[0,g_0,f_0](t)=\tilde{u}(t)\qquad \forall t \in \mathrm{cl}\widetilde{V}\,.
\]
where $\tilde{u}$ is as in Lemma \ref{lem:limeq}.
\end{enumerate}
\end{enumerate}
\end{thm}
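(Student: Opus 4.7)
The starting point is the representation $u_\#[\epsilon,g,f]=w_q^-[\partial\Omega_{p,\epsilon},\Theta[\epsilon,g,f]((\cdot-p)/\epsilon)]+\Xi[\epsilon,g,f]$ provided by Proposition \ref{prop:Lambda}. The plan is to perform the change of variable $y=p+\epsilon s$ in the periodic double layer integral. For $\epsilon>0$, using $\nu_{\Omega_{p,\epsilon}}(p+\epsilon s)=\nu_\Omega(s)$ and $d\sigma_y=\epsilon^{n-1}d\sigma_s$, one obtains
\[
w_q^-[\partial\Omega_{p,\epsilon},\Theta[\epsilon,g,f]((\cdot-p)/\epsilon)](x)=-\epsilon^{n-1}\int_{\partial\Omega}(DS_{q,n}(x-p-\epsilon s))\nu_\Omega(s)\Theta[\epsilon,g,f](s)\,d\sigma_s.
\]
This single identity is the basis for both parts, with $x$ kept far from the singular set in part (i) and rescaled as $x=p+\epsilon t$ in part (ii).

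For part (i), since $\mathrm{cl}V\subseteq\mathbb{R}^n\setminus(p+q\mathbb{Z}^n)$, for $\epsilon_{\#,2}$ sufficiently small the argument $x-p-\epsilon s$ stays in the analyticity region $\mathbb{R}^n\setminus q\mathbb{Z}^n$ of $S_{q,n}$ as $(x,s,\epsilon)$ ranges over $\mathrm{cl}V\times\partial\Omega\times\,]-\epsilon_{\#,2},\epsilon_{\#,2}[$. I set
\[
U_\#[\epsilon,g,f](x)\equiv -\int_{\partial\Omega}(DS_{q,n}(x-p-\epsilon s))\nu_\Omega(s)\Theta[\epsilon,g,f](s)\,d\sigma_s.
\]
By the real analyticity of $\Theta[\cdot,\cdot,\cdot]$ granted by Proposition \ref{prop:Lambda} and by the standard properties of integral operators with real analytic kernels and with no singularity (cf.~\cite[\S 3]{LaMu10b} and \cite{DaLaMu11}), $U_\#$ is real analytic from $]-\epsilon_{\#,2},\epsilon_{\#,2}[\times\mathcal{U}\times\mathcal{V}$ to $C^r(\mathrm{cl}V)$, and the identity of (jj) with factor $\epsilon^{n-1}$ follows at once.

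For part (ii), substituting $x=p+\epsilon t$ turns the boxed displayed identity into $-\epsilon^{n-1}\int_{\partial\Omega}(DS_{q,n}(\epsilon(t-s)))\nu_\Omega(s)\Theta[\epsilon,g,f](s)\,d\sigma_s$. I then split $S_{q,n}=S_n+R_n$. Since $DS_n$ is homogeneous of degree $1-n$, one has $DS_n(\epsilon(t-s))=\epsilon^{1-n}DS_n(t-s)$ for $\epsilon>0$; the factor $\epsilon^{n-1}$ is exactly cancelled, leaving the classical double layer $-\int_{\partial\Omega}(DS_n(t-s))\nu_\Omega(s)\Theta[\epsilon,g,f](s)\,d\sigma_s$, which by the classical continuity of the exterior double layer potential into $C^{m,\alpha}_{\mathrm{loc}}(\mathbb{R}^n\setminus\Omega)$ depends linearly and continuously on $\Theta$ in $C^{m,\alpha}(\mathrm{cl}\widetilde V)$. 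The remaining $R_n$-contribution retains the $\epsilon^{n-1}$ factor and defines an integral operator with a real analytic kernel without singularity (by the analyticity of $R_n$ on $(\mathbb{R}^n\setminus q\mathbb{Z}^n)\cup\{0\}$). Letting $\widetilde U_\#[\epsilon,g,f](t)$ be the sum of these two contributions gives the desired real analytic map, and the limit at $(0,g_0,f_0)$ reduces to $-\int_{\partial\Omega}(DS_n(t-s))\nu_\Omega(s)\tilde\theta(s)\,d\sigma_s=\tilde u(t)$ by Lemma \ref{lem:limeq}.

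Finally, the alternative expression of $U_\#[0,g_0,f_0]$ in terms of $\tilde u$ rests on the vector identity
\[
-\int_{\partial\Omega}\nu_\Omega(s)\tilde\theta(s)\,d\sigma_s=\int_{\partial\Omega}\nu_\Omega(s)\tilde u(s)\,d\sigma_s-\int_{\partial\Omega}s\,\frac{\partial\tilde u}{\partial\nu_\Omega}(s)\,d\sigma_s,
\]
which I expect to be the main technical step. I would derive it by applying Green's second identity to the harmonic function $\tilde u$ and the coordinate function $x_i$ on $\Omega^-\cap\mathbb{B}_n(0,R)$ and then letting $R\to\infty$; the asymptotic expansion $\tilde u(x)=-DS_n(x)\cdot\int_{\partial\Omega}\nu_\Omega\tilde\theta\,d\sigma+O(|x|^{-n})$, obtained from the classical double layer representation of $\tilde u$, yields precisely the moment $-\int_{\partial\Omega}\nu_\Omega\tilde\theta\,d\sigma$ as the limit of the boundary contribution on $\partial\mathbb{B}_n(0,R)$, so that the two sides match. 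Substituting this identity into the first formula for $U_\#[0,g_0,f_0]$ produces the second.
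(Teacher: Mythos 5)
Your proposal is correct and follows essentially the same route as the paper: in part (i) you define $U_{\#}$ by pulling the factor $\epsilon^{n-1}$ out of the rescaled periodic double layer potential, and in part (ii) you split $S_{q,n}=S_n+R_n$ after the substitution $x=p+\epsilon t$, treating the $S_n$ part by classical potential theory and the $R_n$ part as an integral operator with real analytic, nonsingular kernel, exactly as in the paper's proof. Your additional derivation of the moment identity $-\int_{\partial\Omega}\nu_\Omega\tilde\theta\,d\sigma=\int_{\partial\Omega}\nu_\Omega\tilde u\,d\sigma-\int_{\partial\Omega}s\,\frac{\partial\tilde u}{\partial\nu_\Omega}\,d\sigma$ via Green's identity on $(\mathbb{R}^n\setminus\mathrm{cl}\Omega)\cap\mathbb{B}_n(0,R)$ and the $O(|x|^{1-n})$ asymptotics of the double layer potential is sound; the paper simply defers this point to \cite[\S 4]{Mu11} rather than proving it.
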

\begin{proof} We follow the argument of \cite[\S 4]{Mu11}. We first consider statement (i). By taking $\epsilon_{\#,2} \in ]0,\epsilon_1]$ small enough, we can clearly assume that (j) holds. Consider now (jj). By Proposition \ref{prop:Lambda}, if $(\epsilon,g,f) \in ]0,\epsilon_{\#,2}[\times \mathcal{U}\times \mathcal{V}$, we have
\[
\begin{split}
u_\#[\epsilon,g,f](x)=-\epsilon^{n-1}\int_{\partial \Omega}(DS_{q,n}(x-p-\epsilon s))\nu_{\Omega}(s)\Theta[\epsilon,g,f](s)\, d\sigma_s&+\Xi[\epsilon,g,f] \\& \forall x \in \mathrm{cl}V\, .
\end{split}
\]
Thus it is natural to set
\[
U_{\#}[\epsilon,g,f](x)\equiv-\int_{\partial \Omega}(DS_{q,n}(x-p-\epsilon s))\nu_{\Omega}(s)\Theta[\epsilon,g,f](s)\, d\sigma_s \qquad \forall x \in \mathrm{cl}V\, ,
\]
for all $(\epsilon,g,f) \in ]-\epsilon_{\#,2},\epsilon_{\#,2}[\times \mathcal{U}\times \mathcal{V}$. Then Proposition \ref{prop:Lambda}, standard properties of integral  operators with real analytic kernels and with no singularity (cf.~\textit{e.g.}, \cite[\S 4]{LaMu10b}), and classical potential theory (cf.~\textit{e.g.}, Miranda~\cite{Mi65}, Lanza and Rossi \cite[Thm.~3.1]{LaRo04}) imply that $U_{\#}$ is a real analytic map from $]-\epsilon_{\#,2},\epsilon_{\#,2}[\times \mathcal{U}\times \mathcal{V}$ to $C^r(\mathrm{cl}V)$ such that (jj) holds (see also \cite[\S 4]{Mu11}.) \par Consider now (ii). Let $R>0$ be such that $(\mathrm{cl}\widetilde{V}\cup \mathrm{cl} \Omega)\subseteq \mathbb{B}_n(0,R)$. By the continuity of the restriction operator from $C^{m,\alpha}(\mathrm{cl}\mathbb{B}_n(0,R)\setminus \Omega)$ to $C^{m,\alpha}(\mathrm{cl}\widetilde{V})$, it suffices to prove statement (ii) with $\widetilde{V}$ replaced by $\mathbb{B}_n(0,R)\setminus \mathrm{cl}\Omega$. By taking $\tilde{\epsilon}_{\#,2} \in ]0,\epsilon_1]$ small enough, we can assume that 
\[
p+\epsilon \mathrm{cl}\mathbb{B}_n(0,R)\subseteq Q \qquad \forall \epsilon \in ]-\tilde{\epsilon}_{\#,2},\tilde{\epsilon}_{\#,2}[\,.
\] 
If $(\epsilon,g,f) \in ]0,\tilde{\epsilon}_{\#,2}[\times \mathcal{U}\times \mathcal{V}$, a simple computation based on the Theorem of change of variables in integrals shows that
\[
\begin{split}
u_\#[\epsilon,g,f](p+\epsilon t)=&-\int_{\partial\Omega}(DS_{n}(t-s) )\nu_{\Omega}(s)\Theta[\epsilon,g,f](s)\,d\sigma_{s}\\&-\epsilon^{n-1}\int_{\partial\Omega}(DR_n(\epsilon(t-s)) )\nu_{\Omega}(s)\Theta[\epsilon,g,f](s)\,d\sigma_{s} +\Xi[\epsilon,g,f]
\end{split}
\]
for all $t \in \mathrm{cl}\mathbb{B}_n(0,R) \setminus \mathrm{cl}\Omega$. If $(\epsilon,g,f) \in ]-\tilde{\epsilon}_{\#,2},\tilde{\epsilon}_{\#,2}[\times \mathcal{U}\times \mathcal{V}$, classical potential theory implies that the function 
\[
-\int_{\partial\Omega}(DS_{n}(t-s) )\nu_{\Omega}(s)\Theta[\epsilon,g,f](s)\,d\sigma_{s}
\]
of the variable $t \in \mathrm{cl}\mathbb{B}_n(0,R) \setminus \mathrm{cl}\Omega$ admits an extension to $\mathrm{cl}\mathbb{B}_n(0,R) \setminus \Omega$ of class $C^{m,\alpha}(\mathrm{cl}\mathbb{B}_n(0,R) \setminus \Omega)$, which we denote by $w^-[\partial \Omega,\Theta[\epsilon,g,f]]_{|\mathrm{cl}\mathbb{B}_n(0,R) \setminus \Omega}$ (cf.~\textit{e.g.}, Miranda~\cite{Mi65}, Lanza and Rossi \cite[Thm.~3.1]{LaRo04}.) Then classical potential theory and Proposition \ref{prop:Lambda} imply that the map from $]-\tilde{\epsilon}_{\#,2},\tilde{\epsilon}_{\#,2}[\times \mathcal{U}\times \mathcal{V}$ to $C^{m,\alpha}(\mathrm{cl}\mathbb{B}_n(0,R) \setminus \Omega)$ which takes $(\epsilon,g,f)$ to $w^-[\partial \Omega,\Theta[\epsilon,g,f]]_{|\mathrm{cl}\mathbb{B}_n(0,R) \setminus \Omega}$ is real analytic (cf.~\textit{e.g.}, Miranda~\cite{Mi65}, Lanza and Rossi \cite[Thm.~3.1]{LaRo04}.)  Therefore,  if we set
\[
\begin{split}
\widetilde{U}_{\#}[\epsilon,g,f]&(t)\equiv w^-[\partial \Omega,\Theta[\epsilon,g,f]]_{|\mathrm{cl}\mathbb{B}_n(0,R) \setminus \Omega}(t)\\&-\epsilon^{n-1}\int_{\partial\Omega}(DR_n(\epsilon(t-s)) )\nu_{\Omega}(s)\Theta[\epsilon,g,f](s)\,d\sigma_{s}  \quad \forall t \in \mathrm{cl}\mathbb{B}_n(0,R) \setminus \Omega\, ,
\end{split}
\]
for all $(\epsilon,g,f) \in ]-\tilde{\epsilon}_{\#,2},\tilde{\epsilon}_{\#,2}[\times \mathcal{U}\times \mathcal{V}$, Proposition \ref{prop:Lambda}, standard properties of integral  operators with real analytic kernels and with no singularity (cf.~\textit{e.g.}, \cite[\S 4]{LaMu10b}), and classical potential theory imply that $\widetilde{U}_{\#}$ is a real analytic map from $]-\tilde{\epsilon}_{\#,2},\tilde{\epsilon}_{\#,2}[\times \mathcal{U}\times \mathcal{V}$ to $C^{m,\alpha}(\mathrm{cl}\mathbb{B}_n(0,R) \setminus \Omega)$ such that (jj') holds with $\widetilde{V}$ replaced by $\mathbb{B}_n(0,R)\setminus \mathrm{cl}\Omega$ (see also \cite[\S 4]{Mu11}.) Thus the proof is complete.
\end{proof}

Then we analyze the behaviour of the energy integral by means of the following.

\begin{thm}\label{thm:enaux}
Let $m\in {\mathbb{N}}\setminus\{0\}$, $\alpha\in]0,1[$. Let \eqref{ass}--\eqref{assf} hold. Let $\epsilon_1$, $\mathcal{U}$, $\mathcal{V}$ be as in Proposition \ref{prop:Lambda}. Then there exist $\epsilon_{\#,3} \in ]0,\epsilon_1]$ and a real analytic map $G_\#$ from $]-\epsilon_{\#,3},\epsilon_{\#,3}[\times \mathcal{U}\times \mathcal{V}$ to $\mathbb{R}$, such that
\[
\int_{Q \setminus \mathrm{cl}\Omega_{p,\epsilon}}|D_x u_\#[\epsilon,g,f](x)|^2\, dx=\epsilon^{n-2}G_\#[\epsilon,g,f]\, ,
\]
for all $(\epsilon,g,f) \in ]0,\epsilon_{\#,3}[\times \mathcal{U}\times \mathcal{V}$. Moreover,
\[
G_\#[0,g_0,f_0]=\int_{\mathbb{R}^n \setminus \mathrm{cl}\Omega}|D\tilde{u}(t)|^2\, dt\,,
\]
where $\tilde{u}$ is as in Lemma \ref{lem:limeq}.
\end{thm}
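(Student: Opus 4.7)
The plan is to reduce the energy integral to a boundary integral via Green's first identity, eliminate the contribution on $\partial Q$ by $q$-periodicity, rescale the remaining integral on $\partial\Omega_{p,\epsilon}$ to the fixed surface $\partial\Omega$, and read off both the factor $\epsilon^{n-2}$ and a real analytic factor from the maps $\widetilde U_\#$ of Theorem \ref{thm:repaux}(ii) and $\Xi$ of Proposition \ref{prop:Lambda}. Since $u_\#[\epsilon,g,f]\in C^{m,\alpha}_q(\mathrm{cl}\mathbb{S}[\Omega_{p,\epsilon}]^-)$ and $\Delta u_\#[\epsilon,g,f]=0$ in $\mathbb{S}[\Omega_{p,\epsilon}]^-$, the first Green identity gives
\[
\int_{Q\setminus\mathrm{cl}\Omega_{p,\epsilon}}|D_x u_\#[\epsilon,g,f](x)|^2\,dx=\int_{\partial Q}u_\#\,\partial_{\nu_Q}u_\#\,d\sigma-\int_{\partial\Omega_{p,\epsilon}}u_\#\,\partial_{\nu_{\Omega_{p,\epsilon}}}u_\#\,d\sigma\, ,
\]
and the $\partial Q$-term vanishes because on opposite faces the traces of $u_\#$ and of each $\partial_{x_j}u_\#$ coincide by $q$-periodicity while $\nu_Q$ is reversed.

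Next I would pick $R>0$ with $\mathrm{cl}\Omega\subseteq\mathbb{B}_n(0,R)$, apply Theorem \ref{thm:repaux}(ii) with $\widetilde V\equiv\mathbb{B}_n(0,R)\setminus\mathrm{cl}\Omega$, and shrink $\epsilon_1$ to some $\epsilon_{\#,3}\in\,]0,\epsilon_1]$ so that the representation $u_\#[\epsilon,g,f](p+\epsilon t)=\widetilde U_\#[\epsilon,g,f](t)+\Xi[\epsilon,g,f]$ is valid up to $\partial\Omega$. Changing variables $x=p+\epsilon t$ on $\partial\Omega_{p,\epsilon}$ gives $d\sigma_x=\epsilon^{n-1}d\sigma_t$ and $\partial_{\nu_{\Omega_{p,\epsilon}}}u_\#(p+\epsilon t)=\epsilon^{-1}\partial_{\nu_\Omega}\widetilde U_\#[\epsilon,g,f](t)$, so setting
\[
G_\#[\epsilon,g,f]\equiv -\int_{\partial\Omega}\Bigl(\widetilde U_\#[\epsilon,g,f]_{|\partial\Omega}+\Xi[\epsilon,g,f]\Bigr)\,\partial_{\nu_\Omega}\widetilde U_\#[\epsilon,g,f]\,d\sigma
\]
for $(\epsilon,g,f)\in\,]-\epsilon_{\#,3},\epsilon_{\#,3}[\times\mathcal U\times\mathcal V$ produces the stated identity. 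Real analyticity of $G_\#$ is then immediate by composing the analytic maps of Theorem \ref{thm:repaux}(ii) and Proposition \ref{prop:Lambda} with the continuous trace $C^{m,\alpha}(\mathrm{cl}\widetilde V)\to C^{m,\alpha}(\partial\Omega)$, the continuous normal derivative $C^{m,\alpha}(\mathrm{cl}\widetilde V)\to C^{m-1,\alpha}(\partial\Omega)$, the continuous bilinear pointwise multiplication on $\partial\Omega$ (hence analytic), and the continuous linear functional $\int_{\partial\Omega}\cdot\,d\sigma$.

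To identify $G_\#[0,g_0,f_0]$, note that $\widetilde U_\#[0,g_0,f_0]=\tilde u$ and $\Xi[0,g_0,f_0]=\tilde\xi$, so
\[
G_\#[0,g_0,f_0]=-\int_{\partial\Omega}\tilde u\,\partial_{\nu_\Omega}\tilde u\,d\sigma-\tilde\xi\int_{\partial\Omega}\partial_{\nu_\Omega}\tilde u\,d\sigma\, .
\]
Since $\tilde u$ is a double layer potential, it is harmonic in $\mathbb{R}^n\setminus\mathrm{cl}\Omega$ and satisfies $|\tilde u(t)|=O(|t|^{1-n})$ and $|D\tilde u(t)|=O(|t|^{-n})$ as $|t|\to\infty$. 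Applying the divergence theorem on $\mathbb{B}_n(0,\tilde R)\setminus\mathrm{cl}\Omega$ to $D\tilde u$ and to $\tilde u\,D\tilde u$ and letting $\tilde R\to\infty$ kills the boundary contribution on $\partial\mathbb{B}_n(0,\tilde R)$, and simultaneously yields $\int_{\partial\Omega}\partial_{\nu_\Omega}\tilde u\,d\sigma=0$ and $-\int_{\partial\Omega}\tilde u\,\partial_{\nu_\Omega}\tilde u\,d\sigma=\int_{\mathbb{R}^n\setminus\mathrm{cl}\Omega}|D\tilde u|^2\,dt$. Hence $G_\#[0,g_0,f_0]=\int_{\mathbb{R}^n\setminus\mathrm{cl}\Omega}|D\tilde u|^2\,dt$.

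The main technical point is of a bookkeeping nature: one must check that the extension of $\widetilde U_\#$ furnished in the proof of Theorem \ref{thm:repaux}(ii) is genuinely of class $C^{m,\alpha}$ up to $\partial\Omega$, so that both the trace and the normal derivative are well-defined and depend analytically on $(\epsilon,g,f)$; and that the double layer decay is sharp enough, uniformly in $n\geq 2$, to justify the passage to the limit $\tilde R\to\infty$ in the exterior Green identity. Once these points are in place, the argument reduces to two classical applications of Green's identity plus the chain rule for the rescaling.
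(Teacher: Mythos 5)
Your proof is correct and follows essentially the same route as the paper's: Green's first identity plus periodicity to kill the $\partial Q$ contribution, rescaling of the remaining boundary integral to $\partial\Omega$ to extract the factor $\epsilon^{n-2}$, and real analyticity read off from Theorem \ref{thm:repaux} (ii) applied with $\widetilde{V}=\mathbb{B}_n(0,R)\setminus\mathrm{cl}\Omega$. The only (harmless) difference is that the paper replaces the trace of $u_\#$ on $\partial\Omega_{p,\epsilon}$ by the explicit Dirichlet datum $g-P_q[f](p+\epsilon\,\cdot)$ (invoking Lemma \ref{lem:Peps} (ii) for its analyticity), whereas you keep it as $\widetilde{U}_\#[\epsilon,g,f]_{|\partial\Omega}+\Xi[\epsilon,g,f]$; the two expressions coincide for $\epsilon>0$, and your identification of the limiting value via the exterior Green identity and the decay of the double layer potential is the argument the paper delegates to \cite{Mu11}.
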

\begin{proof} We follow the argument of \cite[\S 4]{Mu11}. Let $(\epsilon,g,f) \in ]0,\epsilon_1[\times \mathcal{U}\times \mathcal{V}$. By the Green Formula and by the periodicity of $u_{\#}[\epsilon,g,f](\cdot)$, we have
\begin{equation}\label{eq:en3}
\begin{split}
\int_{Q \setminus \mathrm{cl}\Omega_{p,\epsilon}}&|D_x u_{\#}[\epsilon,g,f](x)|^2\, dx\\
=-&\epsilon^{n-1}\int_{\partial \Omega}D_xu _{\#}[\epsilon,g,f](p+\epsilon t)\nu_{\Omega}(t)u_{\#}[\epsilon,g,f](p+\epsilon t)\, d\sigma_t\\=-&\epsilon^{n-2}\int_{\partial \Omega}D \bigl(u_{\#}[\epsilon,g,f]\circ(p+\epsilon \mathrm{id}_n)\bigr)(t)\nu_{\Omega}(t)\bigl(g(t)-P_q[f](p+\epsilon t)\bigr)\, d\sigma_t\,,
\end{split}
\end{equation}
where $\mathrm{id}_n$ denotes the identity in $\mathbb{R}^n$. Let $R>0$ be such that $\mathrm{cl}\Omega \subseteq \mathbb{B}_n(0,R)$. By Proposition \ref{prop:Lambda} and Theorem \ref{thm:repaux} (ii), there exist $\epsilon_{\#,3} \in ]0,\epsilon_1]$ and a real analytic map $\widehat{U}_\#$ from $]-\epsilon_{\#,3},\epsilon_{\#,3}[\times \mathcal{U}\times \mathcal{V}$ to $C^{m,\alpha}(\mathrm{cl}\mathbb{B}_n(0,R)\setminus \Omega)$, such that
\[
p+\epsilon \mathrm{cl}(\mathbb{B}_n(0,R)\setminus \mathrm{cl}\Omega) \subseteq Q\setminus \Omega_{p,\epsilon} \qquad \forall \epsilon \in ]-\epsilon_{\#,3},\epsilon_{\#,3}[\setminus \{0\}\, ,
\]
and that
\[
\widehat{U}_\#[\epsilon,g,f](t)=u_\#[\epsilon,g,f]\circ (p+\epsilon \mathrm{id}_n)(t) \quad\forall t \in \mathrm{cl}\mathbb{B}_n(0,R)\setminus \Omega \, ,
\]
for all $(\epsilon,g,f) \in ]0,\epsilon_{\#,3}[\times \mathcal{U}\times \mathcal{V}$, and that
\[
\widehat{U}_\#[0,g_0,f_0](t)=\tilde{u}(t)+\tilde{\xi} \qquad \forall t \in \mathrm{cl}\mathbb{B}_n(0,R)\setminus \Omega\,,
\]
where $\tilde{u}$, $\tilde{\xi}$ are as in Lemma \ref{lem:limeq}. By equality \eqref{eq:en3}, we have
\[
\begin{split}
\int_{Q \setminus \mathrm{cl}\Omega_{p,\epsilon}}&|D_x u_\#[\epsilon,g,f](x)|^2\, dx\\&=-\epsilon^{n-2}\int_{\partial \Omega}D_t \widehat{U}_\#[\epsilon,g,f](t)\nu_{\Omega}(t)\bigl(g(t)-P_q[f](p+\epsilon t)\bigr)\, d\sigma_t\,,
\end{split}
\]
for all $(\epsilon,g,f) \in ]0,\epsilon_{\#,3}[\times \mathcal{U}\times \mathcal{V}$. Thus it is natural to set
\[
G_\#[\epsilon,g,f]\equiv -\int_{\partial \Omega}D_t \widehat{U}_\#[\epsilon,g,f](t)\nu_{\Omega}(t)\bigl(g(t)-P_q[f](p+\epsilon t)\bigr) \, d\sigma_t\,,
\]
for all $(\epsilon,g,f) \in ]-\epsilon_{\#,3},\epsilon_{\#,3}[\times \mathcal{U}\times \mathcal{V}$. Then by continuity of the partial derivatives from $C^{m,\alpha}(\mathrm{cl}\mathbb{B}_n(0,R) \setminus \Omega)$ to $C^{m-1,\alpha}(\mathrm{cl}\mathbb{B}_n(0,R) \setminus \Omega)$, and by continuity of the trace operator on $\partial \Omega$ from $C^{m-1,\alpha}(\mathrm{cl}\mathbb{B}_n(0,R) \setminus \Omega)$ to $C^{m-1,\alpha}(\partial \Omega)$, and by the continuity of the pointwise product in Schauder spaces, and by Lemma \ref{lem:Peps} (ii), and by classical potential theory, we conclude that $G_\#$ is a real analytic map from $]-\epsilon_{\#,3},\epsilon_{\#,3}[\times \mathcal{U}\times \mathcal{V}$ to $\mathbb{R}$ and that the Theorem holds (see also \cite[\S 4]{Mu11}.)
\end{proof}

Finally, we consider the integral of $u_{\#}[\cdot,\cdot,\cdot]$, and we prove the following.

\begin{thm}\label{thm:intaux}
Let $m\in {\mathbb{N}}\setminus\{0\}$, $\alpha\in]0,1[$. Let \eqref{ass}--\eqref{assf} hold. Let $\epsilon_1$, $\mathcal{U}$, $\mathcal{V}$ be as in Proposition \ref{prop:Lambda}. Then there exists a real analytic map $J_\#$ from $]-\epsilon_1,\epsilon_1[\times \mathcal{U}\times \mathcal{V}$ to $\mathbb{R}$, such that
\begin{equation}\label{eq:int1aux}
\int_{Q \setminus \mathrm{cl}\Omega_{p,\epsilon}}u_\#[\epsilon,g,f](x)\, dx=J_\#[\epsilon,g,f]\, ,
\end{equation}
for all $(\epsilon,g,f) \in ]0,\epsilon_1[\times \mathcal{U}\times \mathcal{V}$. Moreover,
\begin{equation}\label{eq:int2aux}
J_\#[0,g_0,f_0]=\tilde{\xi}\mathrm{meas}(Q)\,,
\end{equation}
where $\tilde{\xi}$ is as in Lemma \ref{lem:limeq}.
\end{thm}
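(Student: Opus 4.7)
The plan is to combine the representation of $u_\#[\epsilon,g,f]$ given by Proposition \ref{prop:Lambda} with the divergence structure of the periodic double layer exhibited in \eqref{perpot}. For $(\epsilon,g,f)\in\,]0,\epsilon_1[\times\mathcal{U}\times\mathcal{V}$ I would split
\[
\int_{Q\setminus\mathrm{cl}\Omega_{p,\epsilon}}u_\#[\epsilon,g,f](x)\,dx=\Xi[\epsilon,g,f]\bigl(\mathrm{meas}(Q)-\epsilon^n\mathrm{meas}(\Omega)\bigr)+I[\epsilon,g,f]\,,
\]
with
\[
I[\epsilon,g,f]\equiv\int_{Q\setminus\mathrm{cl}\Omega_{p,\epsilon}}w_q^-\!\bigl[\partial\Omega_{p,\epsilon},\Theta[\epsilon,g,f]((\cdot-p)/\epsilon)\bigr](x)\,dx\,.
\]
The first summand is manifestly real analytic on $]-\epsilon_1,\epsilon_1[\times\mathcal{U}\times\mathcal{V}$ by Proposition \ref{prop:Lambda}, and it equals $\tilde\xi\,\mathrm{meas}(Q)$ at $(0,g_0,f_0)$. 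Thus everything reduces to extending $I$ real analytically to the full neighborhood and checking that $I[0,g_0,f_0]=0$.

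By \eqref{perpot}, if we set $\mu\equiv\Theta[\epsilon,g,f]((\cdot-p)/\epsilon)$ we have $w_q^-[\partial\Omega_{p,\epsilon},\mu]=-\mathrm{div}\,W_\epsilon^{\mu}$ on $\mathbb{S}[\Omega_{p,\epsilon}]^{-}$, where
\[
W_{\epsilon}^{\mu}(x)\equiv\int_{\partial\Omega_{p,\epsilon}}S_{q,n}(x-y)\,\nu_{\Omega_{p,\epsilon}}(y)\,\mu(y)\,d\sigma_y
\]
is a $q$-periodic, vector-valued simple-layer, hence continuous on $\mathbb{R}^n$ and of class $C^{m,\alpha}$ on the closure of $\mathbb{S}[\Omega_{p,\epsilon}]^{-}\cap Q$. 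Applying the divergence theorem on $Q\setminus\mathrm{cl}\Omega_{p,\epsilon}$ and using the $q$-periodicity of $W_\epsilon^{\mu}$ to annihilate the flux through $\partial Q$, one obtains
\[
I[\epsilon,g,f]=\int_{\partial\Omega_{p,\epsilon}}W_\epsilon^{\mu}(y)\cdot\nu_{\Omega_{p,\epsilon}}(y)\,d\sigma_y\,.
\]
Rescaling via $y=p+\epsilon t$, $z=p+\epsilon s$ with $t,s\in\partial\Omega$, and using $\nu_{\Omega_{p,\epsilon}}(p+\epsilon\sigma)=\nu_\Omega(\sigma)$ for $\epsilon>0$, the last expression becomes
\[
I[\epsilon,g,f]=\epsilon^{2(n-1)}\!\!\int_{\partial\Omega}\!\!\int_{\partial\Omega}\!S_{q,n}\bigl(\epsilon(t-s)\bigr)\,\nu_\Omega(t)\cdot\nu_\Omega(s)\,\Theta[\epsilon,g,f](s)\,d\sigma_s\,d\sigma_t\,.
\]

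To extend $I$ real analytically to all of $]-\epsilon_1,\epsilon_1[$, I would split $S_{q,n}=S_n+R_n$. Since $R_n$ is real analytic on a neighborhood of $0$, the $R_n$-piece depends real analytically on $(\epsilon,g,f)$ by Proposition \ref{prop:Lambda} and by the standard results on integral operators with real analytic kernels (cf.~\cite[\S 4]{LaMu10b}), and its $\epsilon^{2(n-1)}$ prefactor makes it vanish at $\epsilon=0$. For the $S_n$-piece I would use the homogeneity: if $n>2$, then $S_n(\epsilon u)=\epsilon^{2-n}S_n(u)$ for $\epsilon>0$, so the prefactor collapses to $\epsilon^n$ and the remaining double integral of $S_n(t-s)$ against $\nu_\Omega(t)\cdot\nu_\Omega(s)\Theta[\epsilon,g,f](s)$ depends analytically on $(\epsilon,g,f)$ via $\Theta$; if $n=2$, then $S_2(\epsilon u)=(2\pi)^{-1}\log|\epsilon|+S_2(u)$, and the $\log|\epsilon|$ contribution, once integrated, is proportional to $\bigl(\int_{\partial\Omega}\nu_\Omega\,d\sigma\bigr)\cdot\bigl(\int_{\partial\Omega}\nu_\Omega\Theta[\epsilon,g,f]\,d\sigma\bigr)=0$ thanks to the divergence theorem on $\Omega$, leaving again an $\epsilon^2$-prefactored analytic piece. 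The main obstacle is precisely this $\log|\epsilon|$ cancellation in the two-dimensional case, which is what secures real analyticity (not merely smoothness) through $\epsilon=0$. Since every contribution carries an $\epsilon^n$ or higher prefactor and $n\geq 2$, we conclude $I[0,g_0,f_0]=0$ and finally $J_\#[0,g_0,f_0]=\tilde\xi\,\mathrm{meas}(Q)$.
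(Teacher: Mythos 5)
Your proposal is correct and follows essentially the same route as the paper: split off $\Xi[\epsilon,g,f](\mathrm{meas}(Q)-\epsilon^n\mathrm{meas}(\Omega))$, rewrite the periodic double layer via \eqref{perpot} as a divergence of simple layer potentials, apply the divergence theorem with periodicity killing the $\partial Q$ flux, rescale, split $S_{q,n}=S_n+R_n$, and use the scaling of $S_n$ together with $\int_{\partial\Omega}\nu_\Omega\,d\sigma=0$ to cancel the $\log\epsilon$ term when $n=2$ (the paper's \eqref{n:Sneps} and \eqref{eq:int3aux}). The resulting $\epsilon^n$ and $\epsilon^{2(n-1)}$ prefactors and the value at $(0,g_0,f_0)$ agree with the paper's $\tilde J_\#$ construction.
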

\begin{proof} Let $(\epsilon,g,f) \in ]0,\epsilon_1[\times \mathcal{U}\times \mathcal{V}$. Clearly,
\[
\begin{split}
\int_{Q \setminus \mathrm{cl}\Omega_{p,\epsilon}}u_\#[\epsilon,g,f](x)\,dx=&\int_{Q \setminus \mathrm{cl}\Omega_{p,\epsilon}}w^-_q\bigl[\partial \Omega_{p,\epsilon},\Theta[\epsilon,g,f]((\cdot-p)/\epsilon)\bigr](x)\,dx\\
&+\Xi[\epsilon,g,f] \bigl(\mathrm{meas}(Q)-\epsilon^n\mathrm{meas}(\Omega)\bigr)\, ,
\end{split}
\]
where $\mathrm{meas}(Q)$ and $\mathrm{meas}(\Omega)$ denote the $n$-dimensional measure of $Q$ and of $\Omega$, respectively. By equality \eqref{perpot}, we have
\[
\begin{split}
w^-_q&\bigl[\partial \Omega_{p,\epsilon},\Theta[\epsilon,g,f]((\cdot-p)/\epsilon)\bigr](x)\\&=-	\sum_{j=1}^n \frac{\partial}{\partial x_j}v^-_q\bigl[\partial \Omega_{p,\epsilon},\Theta[\epsilon,g,f]((\cdot-p)/\epsilon)(\nu_{\Omega_{p,\epsilon}}(\cdot))_j\bigr](x)\quad \forall x \in \mathrm{cl} Q \setminus \mathrm{cl}\Omega_{p,\epsilon}\,.
\end{split}
\]
Let $j \in \{1,\dots,n\}$. By the Divergence Theorem and the periodicity of the periodic simple layer potential, we have
\[
\begin{split}
\int_{Q \setminus \mathrm{cl}\Omega_{p,\epsilon}}&\frac{\partial}{\partial x_j}v^-_q\bigl[\partial \Omega_{p,\epsilon},\Theta[\epsilon,g,f]((\cdot-p)/\epsilon)(\nu_{\Omega_{p,\epsilon}}(\cdot))_j\bigr](x)\,dx\\
=&\int_{\partial Q}v^-_q\bigl[\partial \Omega_{p,\epsilon},\Theta[\epsilon,g,f]((\cdot-p)/\epsilon)(\nu_{\Omega_{p,\epsilon}}(\cdot))_j\bigr](x)(\nu_{Q}(x))_j \,d\sigma_x\\
&-\int_{\partial \Omega_{p,\epsilon}}v^-_q\bigl[\partial \Omega_{p,\epsilon},\Theta[\epsilon,g,f]((\cdot-p)/\epsilon)(\nu_{\Omega_{p,\epsilon}}(\cdot))_j\bigr](x)(\nu_{\Omega_{p,\epsilon}}(x))_j \,d\sigma_x\\
=&-\epsilon^{n-1}\int_{\partial \Omega}v^-_q\bigl[\partial \Omega_{p,\epsilon},\Theta[\epsilon,g,f]((\cdot-p)/\epsilon)(\nu_{\Omega_{p,\epsilon}}(\cdot))_j\bigr](p+\epsilon t)(\nu_{\Omega}(t))_j \,d\sigma_t\,.
\end{split}
\]
Then we note that
\[
\begin{split}
v^-_q\bigl[\partial \Omega_{p,\epsilon},&\Theta[\epsilon,g,f]((\cdot-p)/\epsilon)(\nu_{\Omega_{p,\epsilon}}(\cdot))_j\bigr](p+\epsilon t)\\
=&\epsilon^{n-1}\int_{\partial \Omega}S_n(\epsilon(t-s))\Theta[\epsilon,g,f](s)(\nu_{\Omega}(s))_j\,d\sigma_s\\
&+\epsilon^{n-1}\int_{\partial \Omega}R_n(\epsilon(t-s))\Theta[\epsilon,g,f](s)(\nu_{\Omega}(s))_j\,d\sigma_s \qquad \forall t \in \partial \Omega\,.
\end{split}
\]
We now observe that if $\epsilon >0$ and $x \in \mathbb{R}^n \setminus \{0\}$ then we have
\begin{equation}\label{n:Sneps}
S_{n}(\epsilon x)=\epsilon^{2-n}S_{n}(x)+\delta_{2,n}\frac{1}{2\pi}\log\epsilon\,.
\end{equation}
Moreover, by the Divergence Theorem, it's immediate to see that
\begin{equation}\label{eq:int3aux}
\begin{split}
 \int_{\partial \Omega}\Bigl(\int_{\partial \Omega}&\Theta[\epsilon,g,f](s)(\nu_{\Omega}(s))_j \,d\sigma_s\Bigr)(\nu_{\Omega}(t))_j\,d\sigma_t \\
 &=\Bigl(\int_{\partial \Omega}\Theta[\epsilon,g,f](s)(\nu_{\Omega}(s))_j \,d\sigma_s\Bigr)\Bigl(\int_{\partial \Omega}(\nu_{\Omega}(t))_j\,d\sigma_t\Bigr)=0\,.
\end{split}
\end{equation}
Hence, by equalities \eqref{n:Sneps} and \eqref{eq:int3aux}, if $(\epsilon,g,f) \in ]0,\epsilon_1[\times \mathcal{U}\times \mathcal{V}$, we have
\[
\begin{split}
\int_{Q \setminus \mathrm{cl}\Omega_{p,\epsilon}}w^-_q&\bigl[\partial \Omega_{p,\epsilon},\Theta[\epsilon,g,f]((\cdot-p)/\epsilon)\bigr](x)\,dx\\
=&\sum_{j=1}^n \epsilon^n\biggl[\int_{\partial \Omega}\Bigl(\int_{\partial \Omega}S_n(t-s)\Theta[\epsilon,g,f](s)(\nu_{\Omega}(s))_j \,d\sigma_s\Bigr)(\nu_{\Omega}(t))_j\,d\sigma_t\\
&+\epsilon^{n-2}\int_{\partial \Omega}\Bigl(\int_{\partial \Omega}R_n(\epsilon(t-s))\Theta[\epsilon,g,f](s)(\nu_{\Omega}(s))_j \,d\sigma_s\Bigr)(\nu_{\Omega}(t))_j\,d\sigma_t\biggr]\,.
\end{split}
\]
Thus we set
\[
\begin{split}
\tilde{J}_\#[\epsilon,g,f]&\equiv\sum_{j=1}^n \biggl[\int_{\partial \Omega}\Bigl(\int_{\partial \Omega}S_n(t-s)\Theta[\epsilon,g,f](s)(\nu_{\Omega}(s))_j \,d\sigma_s\Bigr)(\nu_{\Omega}(t))_j\,d\sigma_t\\
&+\epsilon^{n-2}\int_{\partial \Omega}\Bigl(\int_{\partial \Omega}R_n(\epsilon(t-s))\Theta[\epsilon,g,f](s)(\nu_{\Omega}(s))_j \,d\sigma_s\Bigr)(\nu_{\Omega}(t))_j\,d\sigma_t\biggr]\,,
\end{split}
\]
for all $(\epsilon,g,f) \in ]-\epsilon_1,\epsilon_1[\times \mathcal{U}\times \mathcal{V}$. Clearly, if $(\epsilon,g,f) \in ]0,\epsilon_1[\times \mathcal{U}\times \mathcal{V}$, then 
\[
\int_{Q \setminus \mathrm{cl}\Omega_{p,\epsilon}}w^-_q\bigl[\partial \Omega_{p,\epsilon},\Theta[\epsilon,g,f]((\cdot-p)/\epsilon)\bigr](x)\,dx=\epsilon^n \tilde{J}_\#[\epsilon,g,f] \,.
\]
Then the analyticity of $\Theta$, the continuity of the linear map from $C^{m-1,\alpha}(\partial \Omega)$ to $C^{m,\alpha}(\partial \Omega)$ which takes $f$ to the function $\int_{\partial \Omega}S_n(t-s)f(s)\,d\sigma_s$ of the variable $t \in \partial \Omega$ (cf.~\textit{e.g.}, Miranda~\cite{Mi65}, Lanza and Rossi \cite[Thm.~3.1]{LaRo04}), the continuity of the pointwise product in Schauder spaces, standard properties of integral  operators with real analytic kernels and with no singularity (cf.~\textit{e.g.}, \cite[\S 4]{LaMu10b}), and standard calculus in Banach spaces imply that the map $\tilde{J}_\#$ is real analytic from $]-\epsilon_1,\epsilon_1[\times \mathcal{U}\times \mathcal{V}$ to $\mathbb{R}$. Hence, if we set
\[
J_\#[\epsilon,g,f]\equiv \epsilon^n \tilde{J}_\#[\epsilon,g,f]+\Xi[\epsilon,g,f]\bigl(\mathrm{meas}(Q)-\epsilon^n\mathrm{meas}(\Omega)\bigr)\, 
\]
for all $(\epsilon,g,f)\in ]-\epsilon_1,\epsilon_1[\times \mathcal{U}\times \mathcal{V}$, we immediately deduce that $J_\#$ is a real analytic map from $]-\epsilon_1,\epsilon_1[\times \mathcal{U}\times \mathcal{V}$ to $\mathbb{R}$ such that equalities \eqref{eq:int1aux}, \eqref{eq:int2aux} hold, and thus the proof is complete.
\end{proof}

\section{A functional analytic representation Theorem for the solution of problem \eqref{bvp:Direps} } \label{rep}

In this Section, we deduce by the results of Section \ref{form} for $u_\#[\cdot,\cdot,\cdot]$ the corresponding results for $u[\cdot,\cdot,\cdot]$. By formula \eqref{eq:sum} and by Theorem \ref{thm:repaux}, we immediately deduce the following.

\begin{thm}\label{thm:rep}
Let $m\in {\mathbb{N}}\setminus\{0\}$, $\alpha\in]0,1[$. Let $\rho>0$. Let \eqref{ass}--\eqref{assf} hold. Let $\epsilon_1$, $\mathcal{U}$, $\mathcal{V}$ be as in Proposition \ref{prop:Lambda}. Then the following statements hold.
\begin{enumerate}
\item[(i)] Let $V$ be a bounded open subset of $\mathbb{R}^n$ such that $\mathrm{cl}V \subseteq \mathbb{R}^n \setminus (p+q\mathbb{Z}^n)$. Let $r \in \mathbb{N}$. Then there exist $\epsilon_2 \in ]0,\epsilon_1]$ and a real analytic map $U$ from $]-\epsilon_2,\epsilon_2[\times \mathcal{U}\times \mathcal{V}$ to $C^{r}(\mathrm{cl}V)$ such that the following statements hold.
\begin{enumerate}
\item[(j)] $\mathrm{cl}V \subseteq \mathbb{S}[\Omega_{p,\epsilon}]^{-}$ for all $\epsilon \in ]-\epsilon_2,\epsilon_2[$.
\item[(jj)]
\[
u[\epsilon,g,f](x)=U[\epsilon,g,f](x) +P_q[f](x)\qquad \forall x \in \mathrm{cl}V\,,
\]
for all $(\epsilon,g,f) \in ]0,\epsilon_2[\times \mathcal{U}\times \mathcal{V}$. Moreover,
\[
U[0,g_0,f_0](x)=\tilde{\xi}\qquad \forall x \in \mathrm{cl}V\,,
\]
where $\tilde{\xi}$ is as in Lemma \ref{lem:limeq}.
\end{enumerate}
\item[(ii)] Let $\widetilde{V}$ be a bounded open subset of $\mathbb{R}^n \setminus \mathrm{cl}\Omega$. Then there exist $\tilde{\epsilon}_2 \in ]0,\epsilon_1]$ and a real analytic map $\widetilde{U}$ from $]-\tilde{\epsilon}_2,\tilde{\epsilon}_2[\times \mathcal{U}\times \mathcal{V}$ to $C^{m,\alpha}(\mathrm{cl}\widetilde{V})$ such that the following statements hold.
\begin{enumerate}
\item[(j')] $p+\epsilon\mathrm{cl}\widetilde{V}\subseteq Q\setminus \Omega_{p,\epsilon}$ for all $\epsilon \in ]-\tilde{\epsilon}_2,\tilde{\epsilon}_2[\setminus \{0\}$.
\item[(jj')]
\[
u[\epsilon,g,f](p+\epsilon t)=\widetilde{U}[\epsilon,g,f](t)+P_q[f](p+\epsilon t)\qquad \forall t \in \mathrm{cl}\widetilde{V}\,,
\]
for all $(\epsilon,g,f) \in ]0,\tilde{\epsilon}_2[\times \mathcal{U}\times \mathcal{V}$. Moreover,
\[
\widetilde{U}[0,g_0,f_0](t)=\tilde{u}(t) +\tilde{\xi}\qquad \forall t \in \mathrm{cl}\widetilde{V}\,.
\]
where $\tilde{u}$, $\tilde{\xi}$ are as in Lemma \ref{lem:limeq}.
\end{enumerate}
\end{enumerate}
\end{thm}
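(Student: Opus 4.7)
The plan is to deduce Theorem \ref{thm:rep} directly from the splitting \eqref{eq:sum}, which reads $u[\epsilon,g,f] = u_\#[\epsilon,g,f] + P_q[f]$ on $\mathrm{cl}\mathbb{S}[\Omega_{p,\epsilon}]^-$, together with the representation of $u_\#[\cdot,\cdot,\cdot]$ already established in Theorem \ref{thm:repaux}. No genuinely new analysis is needed; the proof consists in packaging the earlier analytic dependence results while keeping the periodic Newtonian potential term as a separate summand.

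For statement (i), I would take $\epsilon_2 \equiv \epsilon_{\#,2}$ from Theorem \ref{thm:repaux}(i), so that (j) is automatic, and define
\[
U[\epsilon,g,f](x) \equiv \epsilon^{n-1} U_\#[\epsilon,g,f](x) + \Xi[\epsilon,g,f] \qquad \forall x \in \mathrm{cl}V,
\]
for $(\epsilon,g,f) \in ]-\epsilon_2,\epsilon_2[\times \mathcal{U}\times \mathcal{V}$ (here $\Xi[\epsilon,g,f]$ is viewed as a constant element of $C^r(\mathrm{cl}V)$). Real analyticity of $U$ from $]-\epsilon_2,\epsilon_2[\times \mathcal{U}\times \mathcal{V}$ to $C^r(\mathrm{cl}V)$ follows from Proposition \ref{prop:Lambda} (analyticity of $\Xi$), Theorem \ref{thm:repaux}(i) (analyticity of $U_\#$), and the polynomial dependence of $\epsilon^{n-1}$ on $\epsilon$. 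For $(\epsilon,g,f) \in ]0,\epsilon_2[\times \mathcal{U}\times \mathcal{V}$, combining \eqref{eq:sum} with Theorem \ref{thm:repaux}(i)(jj) immediately yields
\[
u[\epsilon,g,f](x) = u_\#[\epsilon,g,f](x) + P_q[f](x) = U[\epsilon,g,f](x) + P_q[f](x),
\]
as required. At $(0,g_0,f_0)$ the factor $\epsilon^{n-1}$ vanishes (since $n\geq 2$), so $U[0,g_0,f_0](x) = \Xi[0,g_0,f_0] = \tilde{\xi}$ by Proposition \ref{prop:Lambda}.

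For statement (ii), I would analogously choose $\tilde{\epsilon}_2 \equiv \tilde{\epsilon}_{\#,2}$ from Theorem \ref{thm:repaux}(ii) to secure (j'), and set
\[
\widetilde{U}[\epsilon,g,f](t) \equiv \widetilde{U}_\#[\epsilon,g,f](t) + \Xi[\epsilon,g,f] \qquad \forall t \in \mathrm{cl}\widetilde{V},
\]
for $(\epsilon,g,f) \in ]-\tilde{\epsilon}_2,\tilde{\epsilon}_2[\times \mathcal{U}\times \mathcal{V}$, again interpreting the scalar $\Xi[\epsilon,g,f]$ as a constant function in $C^{m,\alpha}(\mathrm{cl}\widetilde{V})$. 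Real analyticity of $\widetilde{U}$ is then immediate from Theorem \ref{thm:repaux}(ii) and Proposition \ref{prop:Lambda}. For positive $\epsilon$, evaluating \eqref{eq:sum} at the point $p+\epsilon t$ and substituting the representation (jj') from Theorem \ref{thm:repaux}(ii) gives
\[
u[\epsilon,g,f](p+\epsilon t) = \widetilde{U}_\#[\epsilon,g,f](t) + \Xi[\epsilon,g,f] + P_q[f](p+\epsilon t) = \widetilde{U}[\epsilon,g,f](t) + P_q[f](p+\epsilon t),
\]
and at the degenerate triple, $\widetilde{U}_\#[0,g_0,f_0] = \tilde{u}$ together with $\Xi[0,g_0,f_0]=\tilde{\xi}$ yields the stated limit value $\tilde{u}(t)+\tilde{\xi}$.

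I do not anticipate any real obstacle, since the substantial work was performed in Theorem \ref{thm:repaux} and Proposition \ref{prop:Lambda}. The only subtlety worth highlighting is the role of $n\geq 2$: the prefactor $\epsilon^{n-1}$ in (i) vanishes at $\epsilon=0$, which is what makes $U[0,g_0,f_0]$ collapse to the constant $\tilde{\xi}$ and reflects the fact that away from the hole the leading-order correction coming from the periodic double layer is of order $\epsilon^{n-1}$.
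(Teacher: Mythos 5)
Your proposal is correct and coincides with the paper's own argument: the paper deduces Theorem \ref{thm:rep} immediately from formula \eqref{eq:sum} and Theorem \ref{thm:repaux}, exactly as you do by setting $U=\epsilon^{n-1}U_{\#}+\Xi$ and $\widetilde{U}=\widetilde{U}_{\#}+\Xi$. The verification of the limiting values via $\Xi[0,g_0,f_0]=\tilde{\xi}$, $\widetilde{U}_{\#}[0,g_0,f_0]=\tilde{u}$, and the vanishing of $\epsilon^{n-1}$ at $\epsilon=0$ is precisely what the paper's ``immediate'' deduction relies on.
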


As far as the energy integral of the solution is concerned, we have the following.

\begin{thm}\label{thm:en}
Let $m\in {\mathbb{N}}\setminus\{0\}$, $\alpha\in]0,1[$. Let \eqref{ass}--\eqref{assf} hold. Let $\epsilon_1$, $\mathcal{U}$, $\mathcal{V}$ be as in Proposition \ref{prop:Lambda}. Then there exist $\epsilon_3 \in ]0,\epsilon_1]$ and a real analytic map $G$ from $]-\epsilon_3,\epsilon_3[\times \mathcal{U}\times \mathcal{V}$ to $\mathbb{R}$, such that
\begin{equation}\label{eq:en1}
\int_{Q \setminus \mathrm{cl}\Omega_{p,\epsilon}}|D_x u[\epsilon,g,f](x)|^2\, dx=\epsilon^{n-2}G[\epsilon,g,f]+\int_{Q}|D_x P_q[f](x)|^2\, dx\, ,
\end{equation}
for all $(\epsilon,g,f) \in ]0,\epsilon_3[\times \mathcal{U}\times \mathcal{V}$. Moreover, 
\begin{equation}\label{eq:en2}
G[0,g_0,f_0]=\int_{\mathbb{R}^n \setminus \mathrm{cl}\Omega}|D\tilde{u}(t)|^2\, dt\,,
\end{equation}
where $\tilde{u}$ is as in Lemma \ref{lem:limeq}.
\end{thm}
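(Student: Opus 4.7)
The plan is to reduce Theorem \ref{thm:en} to Theorem \ref{thm:enaux} by means of the decomposition $u[\epsilon,g,f]=u_{\#}[\epsilon,g,f]+P_q[f]$ in \eqref{eq:sum}. Expanding
\[
|D_xu|^2=|D_xu_{\#}|^2+2\,D_xu_{\#}\cdot D_xP_q[f]+|D_xP_q[f]|^2
\]
and integrating over $Q\setminus\mathrm{cl}\Omega_{p,\epsilon}$, the first term equals $\epsilon^{n-2}G_{\#}[\epsilon,g,f]$ by Theorem \ref{thm:enaux}. The third term equals $\int_Q|D_xP_q[f]|^2\,dx-\int_{\Omega_{p,\epsilon}}|D_xP_q[f]|^2\,dx$, and after the change of variables $y=p+\epsilon s$ the subtracted piece becomes $\epsilon^n N[\epsilon,f]$ with
\[
N[\epsilon,f]\equiv\int_\Omega|\nabla P_q[f](p+\epsilon s)|^2\,ds.
\]
Since $\partial_lP_q[f]=P_q[\partial_lf]$ by integration by parts on $Q$ using $q$-periodicity, and differentiation is continuous in the Roumieu scale, Theorem \ref{thm:newperpot}(ii) yields that $f\mapsto\partial_lP_q[f]$ is linear continuous from $C^0_{q,\omega,\rho}(\mathbb{R}^n)$ to $C^0_{q,\omega,\rho''}(\mathbb{R}^n)$ for some $\rho''>0$; Proposition \ref{prop:Pr} of the Appendix then gives real analyticity of $(\epsilon,f)\mapsto(\partial_lP_q[f])(p+\epsilon\,\cdot)$ into $C^{m,\alpha}(\mathrm{cl}\Omega)$, and squaring, summing on $l$ and integrating produce the real analyticity of $N$.

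For the mixed term I apply Green's first identity on $Q\setminus\mathrm{cl}\Omega_{p,\epsilon}$, using that $u_{\#}$ is harmonic; the $\partial Q$ contribution cancels by $q$-periodicity of $u_{\#}$ and $P_q[f]$, so
\[
\int_{Q\setminus\mathrm{cl}\Omega_{p,\epsilon}}D_xu_{\#}\cdot D_xP_q[f]\,dx=-\int_{\partial\Omega_{p,\epsilon}}P_q[f](x)\,\partial_{\nu_{\Omega_{p,\epsilon}}}u_{\#}(x)\,d\sigma_x.
\]
Rescaling via $x=p+\epsilon t$ (with $\nu_{\Omega_{p,\epsilon}}(p+\epsilon t)=\nu_\Omega(t)$ for $\epsilon>0$) and expressing $u_{\#}\circ(p+\epsilon\mathrm{id}_n)$ on a neighbourhood of $\partial\Omega$ via the real analytic map $\widehat{U}_{\#}$ produced in the proof of Theorem \ref{thm:enaux}, one obtains $-\epsilon^{n-2}M[\epsilon,g,f]$ with
\[
M[\epsilon,g,f]\equiv\int_{\partial\Omega}P_q[f](p+\epsilon t)\,D_t\widehat{U}_{\#}[\epsilon,g,f](t)\,\nu_\Omega(t)\,d\sigma_t.
\]
By Lemma \ref{lem:Peps}(ii), the analyticity of $\widehat{U}_{\#}$, continuity of differentiation and trace in Schauder spaces, and bilinearity of the pointwise product, $M$ is real analytic on a suitable neighbourhood $]-\epsilon_3,\epsilon_3[\times\mathcal{U}\times\mathcal{V}$ of $(0,g_0,f_0)$.

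Collecting the three contributions gives
\[
\int_{Q\setminus\mathrm{cl}\Omega_{p,\epsilon}}|D_xu|^2\,dx-\int_Q|D_xP_q[f]|^2\,dx=\epsilon^{n-2}\bigl\{G_{\#}[\epsilon,g,f]-2M[\epsilon,g,f]-\epsilon^2N[\epsilon,f]\bigr\},
\]
so I define $G[\epsilon,g,f]$ to be the expression in braces; after choosing $\epsilon_3$ small enough, $G$ is real analytic on $]-\epsilon_3,\epsilon_3[\times\mathcal{U}\times\mathcal{V}$, and \eqref{eq:en1} holds. At $(0,g_0,f_0)$ the $\epsilon^2N$ contribution drops, $G_{\#}[0,g_0,f_0]=\int_{\mathbb{R}^n\setminus\mathrm{cl}\Omega}|D\tilde u|^2\,dt$ by Theorem \ref{thm:enaux}, and from $\widehat{U}_{\#}[0,g_0,f_0]=\tilde u+\tilde\xi$ we obtain $M[0,g_0,f_0]=P_q[f_0](p)\int_{\partial\Omega}D\tilde u\cdot\nu_\Omega\,d\sigma$. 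Since $\tilde u$ is harmonic in $\mathbb{R}^n\setminus\mathrm{cl}\Omega$ with $\lim_{|t|\to\infty}\tilde u(t)=0$ by Lemma \ref{lem:limeq}, applying the divergence theorem to $\nabla\tilde u$ on $\mathbb{B}_n(0,R)\setminus\mathrm{cl}\Omega$ and letting $R\to\infty$ (with the decay of $|D\tilde u|$ at infinity) gives $\int_{\partial\Omega}D\tilde u\cdot\nu_\Omega\,d\sigma=0$, hence \eqref{eq:en2}. The main obstacle is verifying that the mixed term carries the same power $\epsilon^{n-2}$ as the energy of $u_{\#}$ itself (the extra factor $\epsilon^{-1}$ arising from rescaling the normal derivative of $u_{\#}$) and that the limiting flux of $\tilde u$ across $\partial\Omega$ vanishes, so that no spurious contribution from $P_q[f_0](p)$ survives in $G[0,g_0,f_0]$.
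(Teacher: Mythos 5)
Your proposal follows essentially the same route as the paper: decompose $u[\epsilon,g,f]=u_{\#}[\epsilon,g,f]+P_q[f]$ via \eqref{eq:sum}, expand the square, use Theorem \ref{thm:enaux} for the pure $u_{\#}$ term, treat the mixed term by Green's identity, periodicity, the rescaling $x=p+\epsilon t$ and the map $\widehat{U}_{\#}$ coming from Theorem \ref{thm:repaux}(ii), and treat the $|D P_q[f]|^2$ term by splitting off $\int_Q|D P_q[f]|^2\,dx$ and proving analyticity of $(\epsilon,f)\mapsto\int_\Omega|DP_q[f](p+\epsilon s)|^2\,ds$ through the Roumieu/composition machinery (your variant, commuting $\partial_l$ with $P_q$ and then applying Proposition \ref{prop:Pr}, is an acceptable substitute for the paper's appeal to the analyticity of $f\mapsto|DP_q[f]|^2$ in the Roumieu scale). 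The bookkeeping of powers of $\epsilon$ and the final assembly $G=G_{\#}-2M-\epsilon^2N$ coincide with the paper's $G=G_{\#}+G_1-\epsilon^2\int_\Omega|DP_q[f](p+\epsilon t)|^2dt$.

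The one step that needs tightening is your justification of $\int_{\partial\Omega}D\tilde u\cdot\nu_\Omega\,d\sigma=0$, which is what makes $M[0,g_0,f_0]=0$ and hence removes the spurious $P_q[f_0](p)$ contribution. Harmonicity in $\mathbb{R}^n\setminus\mathrm{cl}\Omega$ together with $\lim_{t\to\infty}\tilde u(t)=0$ is \emph{not} enough for $n\geq 3$: a harmonic function vanishing at infinity generically behaves like $cS_n+O(|t|^{1-n})$, so $|D\tilde u|=O(|t|^{1-n})$ only, the flux through $\partial\mathbb{B}_n(0,R)$ stays bounded but need not tend to $0$, and indeed $\tilde u=S_n$ itself has nonzero flux. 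What saves the argument is the specific representation of $\tilde u$ in Lemma \ref{lem:limeq} as an (exterior) double layer potential with density $\tilde\theta$: this gives $\tilde u=O(|t|^{1-n})$ and $D\tilde u=O(|t|^{-n})$, so the flux through $\partial\mathbb{B}_n(0,R)$ is $O(R^{-1})$ and your limiting divergence-theorem argument then closes. Equivalently, one can invoke the classical jump relations (the normal derivative of the double layer potential is continuous across $\partial\Omega$) and the divergence theorem applied to the interior harmonic extension in $\Omega$. This is precisely what the paper's terse ``by classical potential theory'' refers to; with that decay (or jump-relation) input made explicit, your proof is complete.
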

\begin{proof} Let $\epsilon_{\#,3}$, $G_\#$ be as in Theorem \ref{thm:enaux}. If $(\epsilon,g,f) \in ]0,\epsilon_{\#,3}[\times \mathcal{U}\times \mathcal{V}$, then we have
\[
\begin{split}
\int_{Q \setminus \mathrm{cl}\Omega_{p,\epsilon}}&|D_x u[\epsilon,g,f](x)|^2\, dx=\int_{Q \setminus \mathrm{cl}\Omega_{p,\epsilon}}|D_x u_\#[\epsilon,g,f](x)|^2\, dx\\+&2\int_{Q \setminus \mathrm{cl}\Omega_{p,\epsilon}}D_x u_\#[\epsilon,g,f](x)\cdot D_xP_q[f](x)\, dx+\int_{Q \setminus \mathrm{cl}\Omega_{p,\epsilon}}|D_x P_q[f](x)|^2\, dx\,.
\end{split}
\]
By the Divergence Theorem, by the harmonicity of $u_\#[\epsilon,g,f]$, and by the periodicity of $u_\#[\epsilon,g,f]$ and $P_q[f]$, we have
\[
\begin{split}
2\int_{Q \setminus \mathrm{cl}\Omega_{p,\epsilon}}D_x &u_\#[\epsilon,g,f](x)\cdot D_xP_q[f](x)\, dx\\&=-2\epsilon^{n-1}\int_{\partial \Omega}P_q[f](p+\epsilon t)\Bigl(\frac{\partial u_\#[\epsilon,g,f]}{\partial \nu_{\Omega_{p,\epsilon}}}\Bigr)(p+\epsilon t)\, d\sigma_t\\
&=-2\epsilon^{n-2}\int_{\partial \Omega}P_q[f](p+\epsilon t)D\bigl(u_\#[\epsilon,g,f]\circ (p+\epsilon \mathrm{id}_n) \bigr)(t)\nu_{\Omega}(t)\, d\sigma_t\,,
\end{split}
\]
where $\mathrm{id}_n$ denotes the identity map in $\mathbb{R}^n$. Let $R>0$ be such that $\mathrm{cl}\Omega \subseteq \mathbb{B}_n(0,R)$. By Proposition \ref{prop:Lambda} and Theorem \ref{thm:repaux} (ii), there exist $\epsilon_{3} \in ]0,\epsilon_{\#,3}]$ and a real analytic map $\widehat{U}_{\#}$ from $]-\epsilon_{3},\epsilon_{3}[\times \mathcal{U}\times \mathcal{V}$ to $C^{m,\alpha}(\mathrm{cl}\mathbb{B}_n(0,R)\setminus \Omega)$, such that
\[
p+\epsilon \mathrm{cl}(\mathbb{B}_n(0,R)\setminus \mathrm{cl}\Omega) \subseteq Q\setminus \Omega_{p,\epsilon} \qquad \forall \epsilon \in ]-\epsilon_{3},\epsilon_{3}[\setminus \{0\}\, ,
\]
and that
\[
\widehat{U}_{\#}[\epsilon,g,f](t)=u_\#[\epsilon,g,f]\circ (p+\epsilon \mathrm{id}_n)(t) \qquad\forall t \in \mathrm{cl}\mathbb{B}_n(0,R)\setminus \Omega \, ,
\]
for all $(\epsilon,g,f) \in ]0,\epsilon_3[\times \mathcal{U}\times \mathcal{V}$, and that
\[
\widehat{U}_{\#}[0,g_0,f_0](t)=\tilde{u}(t)+\tilde{\xi} \qquad \forall t \in \mathrm{cl}\mathbb{B}_n(0,R)\setminus \Omega\,,
\]
where $\tilde{u}$, $\tilde{\xi}$ are as in Lemma \ref{lem:limeq}. Then we have
\[
\begin{split}
2\int_{Q \setminus \mathrm{cl}\Omega_{p,\epsilon}}D_x &u_\#[\epsilon,g,f](x)\cdot D_xP_q[f](x)\, dx\\
&=-2\epsilon^{n-2}\int_{\partial \Omega}P_q[f](p+\epsilon t)D_t \widehat{U}_\#[\epsilon,g,f](t)\nu_{\Omega}(t)\, d\sigma_t\,.
\end{split}
\]
for all $(\epsilon,g,f) \in ]0,\epsilon_3[\times \mathcal{U}\times \mathcal{V}$.
Thus it is natural to set
\[
G_1[\epsilon,g,f]\equiv -2\int_{\partial \Omega}P_q[f](p+\epsilon t)D_t \widehat{U}_\#[\epsilon,g,f](t)\nu_{\Omega}(t)\, d\sigma_t\,,
\]
for all $(\epsilon,g,f) \in ]-\epsilon_{3},\epsilon_{3}[\times \mathcal{U}\times \mathcal{V}$. Then by continuity of the partial derivatives from $C^{m,\alpha}(\mathrm{cl}\mathbb{B}_n(0,R) \setminus \Omega)$ to $C^{m-1,\alpha}(\mathrm{cl}\mathbb{B}_n(0,R) \setminus \Omega)$, and by continuity of the trace operator on $\partial \Omega$ from $C^{m-1,\alpha}(\mathrm{cl}\mathbb{B}_n(0,R) \setminus \Omega)$ to $C^{m-1,\alpha}(\partial \Omega)$, and by the continuity of the pointwise product in Schauder spaces, and by Lemma \ref{lem:Peps} (ii), we conclude that $G_1[\cdot,\cdot,\cdot]$ is a real analytic map from $]-\epsilon_{3},\epsilon_{3}[\times \mathcal{U}\times \mathcal{V}$ to $\mathbb{R}$. Moreover, by classical potential theory, we have
\[
G_1[0,g_0,f_0]=-2\int_{\partial \Omega}P_q[f_0](p)\frac{\partial \tilde{u}}{\partial \nu_{\Omega}}(t)\,d\sigma_t=0\,
\]
(see also \cite[\S 4]{Mu11}.) If $(\epsilon,f) \in ]0,\epsilon_0[\times \mathcal{V}$, then clearly
\[
\int_{Q\setminus \mathrm{cl}\Omega_{p,\epsilon}}|D_xP_q[f](x)|^2\, dx=\int_{Q}|D_xP_q[f](x)|^2\, dx-\epsilon^n\int_{\Omega}|D_xP_q[f](p+\epsilon t)|^2\, dt\,.
\]
By standard properties of functions in the Roumieu class, there exists $\rho' \in ]0,\rho]$ such that the map from $C^0_{q,\omega,\rho}(\mathbb{R}^n)$ to $C^0_{q,\omega,\rho'}(\mathbb{R}^n)$ which takes $f$ to $|D_xP_q[f](\cdot)|^2$ is real analytic (cf.~\textit{e.g.}, the proof of Lanza \cite[Prop.~2.25]{La05}.) By arguing as in the proof of Lemma \ref{lem:Peps}, one can show that the map from $]-\epsilon_0,\epsilon_0[\times \mathcal{V}$ to $C^{m,\alpha}(\mathrm{cl}\Omega)$ which takes $(\epsilon,f)$ to the function $|D_x P_q[f](p+\epsilon t)|^2$ of the variable $t \in \mathrm{cl}\Omega$ is real analytic. Then, by the continuity of the linear operator from $C^{m,\alpha}(\mathrm{cl}\Omega)$ to $\mathbb{R}$ which takes $h$ to $\int_{\Omega}h(y)\, dy$, we immediately deduce that the map from $]-\epsilon_0,\epsilon_0[\times \mathcal{V}$ to $\mathbb{R}$ which takes $(\epsilon,f)$ to $\int_{\Omega}|D_xP_q[f](p+\epsilon t)|^2\, dt$ is real analytic. Thus if we set
\[
G[\epsilon,g,f]\equiv G_\#[\epsilon,g,f]+G_1[\epsilon,g,f]-\epsilon^2\int_{\Omega}|D_xP_q[f](p+\epsilon t)|^2\, dt
\]
for all $(\epsilon,g,f) \in ]-\epsilon_3,\epsilon_3[\times \mathcal{U}\times \mathcal{V}$, we deduce that $G$ is a real analytic map from $ ]-\epsilon_3,\epsilon_3[\times \mathcal{U}\times \mathcal{V}$ to $\mathbb{R}$ such that equalities \eqref{eq:en1}, \eqref{eq:en2} hold.
\end{proof}

\begin{rem} We note that the map from $C^0_{q,\omega,\rho}(\mathbb{R}^n)$ to $\mathbb{R}$ which takes $f$ to $\int_{Q}|D_x P_q[f](x)|^2\, dx$ is real analytic. 
\end{rem}

Finally, we consider the integral of $u[\cdot,\cdot,\cdot]$ and we prove the following.

\begin{thm}\label{thm:int}
Let $m\in {\mathbb{N}}\setminus\{0\}$, $\alpha\in]0,1[$. Let \eqref{ass}--\eqref{assf} hold. Let $\epsilon_1$, $\mathcal{U}$, $\mathcal{V}$ be as in Proposition \ref{prop:Lambda}. Then there exists a real analytic map $J$ from $]-\epsilon_1,\epsilon_1[\times \mathcal{U}\times \mathcal{V}$ to $\mathbb{R}$, such that
\[
\int_{Q \setminus \mathrm{cl}\Omega_{p,\epsilon}}u[\epsilon,g,f](x)\, dx=J[\epsilon,g,f]+\int_{Q}P_q[f](x)\, dx\, ,
\]
for all $(\epsilon,g,f) \in ]0,\epsilon_1[\times \mathcal{U}\times \mathcal{V}$. Moreover,
\begin{equation}\label{eq:int2}
J[0,g_0,f_0]=\tilde{\xi}\mathrm{meas}(Q)\,,
\end{equation}
where $\tilde{\xi}$ is as in Lemma \ref{lem:limeq}.
\end{thm}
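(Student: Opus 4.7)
The plan is to reduce Theorem \ref{thm:int} to Theorem \ref{thm:intaux} by means of the decomposition \eqref{eq:sum}. Namely, since $u[\epsilon,g,f] = u_\#[\epsilon,g,f] + P_q[f]$ on $\mathrm{cl}\mathbb{S}[\Omega_{p,\epsilon}]^-$, I would write, for $(\epsilon,g,f) \in ]0,\epsilon_1[\times \mathcal{U}\times \mathcal{V}$,
\[
\int_{Q \setminus \mathrm{cl}\Omega_{p,\epsilon}}u[\epsilon,g,f](x)\, dx=\int_{Q \setminus \mathrm{cl}\Omega_{p,\epsilon}}u_\#[\epsilon,g,f](x)\, dx+\int_{Q \setminus \mathrm{cl}\Omega_{p,\epsilon}}P_q[f](x)\, dx\,.
\]
The first summand is already handled by Theorem \ref{thm:intaux}, which gives the real analytic map $J_\#$ on $]-\epsilon_1,\epsilon_1[\times \mathcal{U}\times \mathcal{V}$ with $J_\#[0,g_0,f_0]=\tilde{\xi}\mathrm{meas}(Q)$.

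The second summand I would split as
\[
\int_{Q \setminus \mathrm{cl}\Omega_{p,\epsilon}}P_q[f](x)\, dx=\int_{Q}P_q[f](x)\, dx-\int_{\Omega_{p,\epsilon}}P_q[f](x)\, dx=\int_{Q}P_q[f](x)\, dx-\epsilon^n\int_{\Omega}P_q[f](p+\epsilon t)\,dt\,,
\]
via the change of variable $x=p+\epsilon t$. The first term on the right is to be kept outside of $J$, matching the form of the statement. For the second term, I would invoke Lemma \ref{lem:Peps} (i), which guarantees that the map $(\epsilon,f)\mapsto P_q[f]\circ(p+\epsilon\,\mathrm{id}_{\mathrm{cl}\Omega})$ from $]-\epsilon_0,\epsilon_0[\times C^{0}_{q,\omega,\rho}(\mathbb{R}^n)$ to $C^{m,\alpha}(\mathrm{cl}\Omega)$ is real analytic. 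Composition with the bounded linear functional $h\mapsto \int_{\Omega}h(t)\,dt$ from $C^{m,\alpha}(\mathrm{cl}\Omega)$ to $\mathbb{R}$ then yields real analyticity of $(\epsilon,f)\mapsto \int_{\Omega}P_q[f](p+\epsilon t)\,dt$, and multiplication by the polynomial $\epsilon^n$ preserves real analyticity.

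Accordingly, I would define
\[
J[\epsilon,g,f]\equiv J_\#[\epsilon,g,f]-\epsilon^n\int_{\Omega}P_q[f](p+\epsilon t)\,dt \qquad \forall (\epsilon,g,f)\in ]-\epsilon_1,\epsilon_1[\times\mathcal{U}\times\mathcal{V}\,.
\]
By the preceding observations, $J$ is real analytic from $]-\epsilon_1,\epsilon_1[\times\mathcal{U}\times\mathcal{V}$ to $\mathbb{R}$, and combining the three contributions gives the claimed identity on $]0,\epsilon_1[\times\mathcal{U}\times\mathcal{V}$. Evaluating at $(0,g_0,f_0)$, the $\epsilon^n$ factor kills the correction term, so $J[0,g_0,f_0]=J_\#[0,g_0,f_0]=\tilde{\xi}\mathrm{meas}(Q)$, which is \eqref{eq:int2}. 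Since every analytic continuation ingredient is provided by Theorem \ref{thm:intaux} and Lemma \ref{lem:Peps} (i), no serious obstacle remains; the only point requiring a touch of care is the verification that the integration-over-$\Omega$ step indeed yields a real analytic functional in $(\epsilon,f)$, which is immediate by composition with the continuous linear functional of integration.
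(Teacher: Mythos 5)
Your proposal is correct and coincides with the paper's own argument: the paper defines $J[\epsilon,g,f]\equiv J_\#[\epsilon,g,f]-\epsilon^{n}\int_{\Omega}P_q[f](p+\epsilon t)\,dt$ and establishes its analyticity exactly as you do, via Lemma \ref{lem:Peps} (i) and the continuity of the linear functional $h\mapsto\int_{\Omega}h(y)\,dy$ on $C^{m,\alpha}(\mathrm{cl}\Omega)$. No discrepancies to report.
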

\begin{proof} Let $J_\#$ be as in Theorem \ref{thm:intaux}. If we set
\[
J[\epsilon,g,f]\equiv J_\#[\epsilon,g,f]-\epsilon^{n}\int_{\Omega}P_q[f](p+\epsilon t)\,dt
\]
for all $(\epsilon,g,f) \in ]-\epsilon_1,\epsilon_1[\times \mathcal{U}\times \mathcal{V}$, then clearly
\[
\int_{Q \setminus \mathrm{cl}\Omega_{p,\epsilon}}u[\epsilon,g,f](x)\, dx=J[\epsilon,g,f]+\int_{Q}P_q[f](x)\, dx\, ,
\]
for all $(\epsilon,g,f) \in ]0,\epsilon_1[\times \mathcal{U}\times \mathcal{V}$. By Lemma \ref{lem:Peps} (i) and by the continuity of the linear operator from $C^{m,\alpha}(\mathrm{cl}\Omega)$ to $\mathbb{R}$ which takes $h$ to $\int_{\Omega}h(y)\, dy$, we immediately deduce that $J$ is a real analytic map from $]-\epsilon_1,\epsilon_1[\times \mathcal{U}\times \mathcal{V}$ to $\mathbb{R}$ and that \eqref{eq:int2} holds.
\end{proof}

\begin{rem} We note that the map from $C^0_{q,\omega,\rho}(\mathbb{R}^n)$ to $\mathbb{R}$ which takes $f$ to $\int_{Q}P_q[f](x)\, dx$ is linear and continuous. 
\end{rem}

\begin{rem} 
Let the assumptions of Proposition \ref{prop:Lambda} hold. Let $\tilde{u}$, $\tilde{\xi}$ be as in Lemma \ref{lem:limeq}. We observe that if $V$ is a bounded open subset of $\mathbb{R}^n$ such that $\mathrm{cl}V \subseteq \mathbb{R}^n \setminus (p+q\mathbb{Z}^n)$ and if $r \in \mathbb{N}$, then Theorem \ref{thm:rep} (i) implies that
\[
\lim_{(\epsilon,g,f)\to (0,g_0,f_0)}u[\epsilon,g,f]= \tilde{\xi}+P_q[f_0] \qquad \text{in $C^{r}(\mathrm{cl}V)$}\, ,
\]
for all $n \in \mathbb{N} \setminus \{0,1\}$. Similarly, by Theorem \ref{thm:int}, we have
\[
\lim_{(\epsilon,g,f)\to (0,g_0,f_0)}\int_{Q \setminus \mathrm{cl}\Omega_{p,\epsilon}}u[\epsilon,g,f](x)\, dx=\tilde{\xi}\mathrm{meas}(Q)+\int_{Q}P_q[f_0](x)\, dx\, ,
\]
for all $n \in \mathbb{N} \setminus \{0,1\}$. Instead, Theorem \ref{thm:en} implies that
\[
\lim_{(\epsilon,g,f)\to (0,g_0,f_0)}\int_{Q \setminus \mathrm{cl}\Omega_{p,\epsilon}}|D_x u[\epsilon,g,f](x)|^2\, dx=\int_{Q}|D_x P_q[f_0](x)|^2\, dx\, ,
\]
if $n \in \mathbb{N} \setminus \{0,1,2\}$, whereas
\[
\begin{split}
\lim_{(\epsilon,g,f)\to (0,g_0,f_0)}\int_{Q \setminus \mathrm{cl}\Omega_{p,\epsilon}}&|D_x u[\epsilon,g,f](x)|^2\, dx\\&=\int_{\mathbb{R}^n \setminus \mathrm{cl}\Omega}|D\tilde{u}(t)|^2\, dt+\int_{Q}|D_x P_q[f_0](x)|^2\, dx\, ,
\end{split}
\]
if $n=2$. 
\end{rem}

\appendix

\section{A real analyticity result for a composition operator}

In this Appendix, we introduce a slight variant of   Preciso \cite[Prop.~4.2.16, p.~51]{Pr98}, Preciso 
\cite[Prop.~1.1, p.~101]{Pr00} on the real analyticity of a composition operator. See also Lanza \cite[Prop.~2.17, Rem.~2.19]{La98} and the slight variant of the argument of Preciso of the proof of
Lanza \cite[Prop.~9, p.~214]{La07b}.

\begin{prop}\label{prop:Pr}
Let $m$, $h$, $k \in \mathbb{N}$, $h$, $k \geq 1$. Let $\alpha \in ]0,1]$, $\rho>0$. Let $\Omega$, $\Omega '$ be bounded open connected  subsets of $\mathbb{R}^h$, $\mathbb{R}^k$, respectively. Let $\Omega '$ be of class $C^1$. Then the operator $T$ defined by
\[
T[u,v]\equiv u\circ v
\]
for all $(u,v)\in C^0_{\omega,\rho}(\mathrm{cl} \Omega) \times C^{m,\alpha}(\mathrm{cl} \Omega ',\Omega)$ is real analytic from the open subset $C^0_{\omega, \rho}(\mathrm{cl} \Omega)\times C^{m,\alpha}(\mathrm{cl} \Omega', \Omega)$ of $C^0_{\omega,\rho}(\mathrm{cl} \Omega)\times C^{m,\alpha}(\mathrm{cl} \Omega', \mathbb{R}^
h)$ to $C^{m,\alpha}(\mathrm{cl} \Omega')$.
\end{prop}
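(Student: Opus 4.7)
The plan is to show that $T$ admits a locally convergent power series expansion around any point of its domain. First I would verify that $C^0_{\omega,\rho}(\mathrm{cl}\Omega)\times C^{m,\alpha}(\mathrm{cl}\Omega',\Omega)$ is open in $C^0_{\omega,\rho}(\mathrm{cl}\Omega)\times C^{m,\alpha}(\mathrm{cl}\Omega',\mathbb{R}^h)$: for $v_0 \in C^{m,\alpha}(\mathrm{cl}\Omega',\Omega)$, compactness of $v_0(\mathrm{cl}\Omega')$ inside the open set $\Omega$ yields $\delta >0$ with $v_0(\mathrm{cl}\Omega')+\mathbb{B}_h(0,2\delta)\subseteq \Omega$, so that any $V \in C^{m,\alpha}(\mathrm{cl}\Omega',\mathbb{R}^h)$ with $\sup |V|<\delta$ satisfies $v_0+V \in C^{m,\alpha}(\mathrm{cl}\Omega',\Omega)$.

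Then I would fix $(u_0,v_0)$ in the domain and expand $T[u_0+U,v_0+V]$ via the pointwise Taylor formula at $v_0(x)$, exploiting that $u_0$ is real analytic:
\[
(u_0+U)(v_0+V) = \sum_{\beta \in \mathbb{N}^h}\frac{1}{\beta !}(D^\beta u_0 \circ v_0)\,V^\beta + \sum_{\beta \in \mathbb{N}^h}\frac{1}{\beta !}(D^\beta U \circ v_0)\,V^\beta,
\]
where $V^\beta \equiv V_1^{\beta_1}\cdots V_h^{\beta_h}$. Grouping by total degree in $(U,V)$, the $k$-homogeneous part consists of the $|\beta|=k$ summand from the first series (pure in $V$) together with the $|\beta|=k-1$ summand from the second series (linear in $U$, of degree $k-1$ in $V$). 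If this series converges in $C^{m,\alpha}(\mathrm{cl}\Omega')$ uniformly for $(U,V)$ in a small ball around $(0,0)$, then $T$ is real analytic at $(u_0,v_0)$ by the power series characterization of real analytic operators (cf.~Prodi and Ambrosetti~\cite{PrAm73}).

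The central estimate I would exploit is as follows. The Cauchy-type bound $\|D^\beta u_0\|_{C^0(\mathrm{cl}\Omega)}\leq \rho^{-|\beta|}|\beta|!\,\|u_0\|_{C^0_{\omega,\rho}}$ is immediate from the Roumieu seminorm. Applying Fa\`a di Bruno's formula to $D^\gamma(D^\beta u_0 \circ v_0)$ for $|\gamma|\leq m$ expresses the result as a polynomial of $\beta$-independent bounded degree in the derivatives $D^\delta v_0$ with $|\delta|\leq m$, times factors $D^{\beta+\eta}u_0 \circ v_0$ with $|\eta|\leq m$; controlling the top order H\"older quotient by a mean value expansion using $v_0 \in C^{m,\alpha}$ yields an estimate of the form
\[
\|D^\beta u_0 \circ v_0\|_{C^{m,\alpha}(\mathrm{cl}\Omega')}\leq c_1(u_0,v_0)\,\frac{(|\beta|+m+1)!}{\tilde\rho^{\,|\beta|}},
\]
for some $\tilde\rho \in ]0,\rho[$ depending only on $\rho$, $m$, and $\|v_0\|_{C^{m,\alpha}}$, but not on $\beta$. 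Combining with the Banach algebra estimate $\|V^\beta\|_{C^{m,\alpha}}\leq c_2^{|\beta|}\|V\|_{C^{m,\alpha}}^{|\beta|}$ and the multinomial identity $\sum_{|\beta|=k}\frac{|\beta|!}{\beta !}=h^k$, the first series is dominated by a convergent geometric series once $\|V\|_{C^{m,\alpha}}<\tilde\rho/(h c_2)$. The second series is treated identically, with $\|U\|_{C^0_{\omega,\rho}}$ factored out linearly.

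The hard part will be the composition estimate on $\|D^\beta u_0 \circ v_0\|_{C^{m,\alpha}(\mathrm{cl}\Omega')}$: one must track the interplay between the Roumieu growth rate in $\beta$ and the Schauder norm on the range, ensuring that the radius loss $\rho-\tilde\rho>0$ can be chosen uniformly in $\beta$. This is precisely the technical content of Preciso's propositions cited at the start of the Appendix, whose proofs hinge on the binomial inequality $\binom{\beta}{\alpha}\leq \binom{|\beta|}{|\alpha|}$ already invoked in Section~\ref{not}. I would follow that argument, adapted to the product space $C^0_{\omega,\rho}(\mathrm{cl}\Omega)\times C^{m,\alpha}(\mathrm{cl}\Omega',\Omega)$, with the additional linear $U$-dependence handled by the same multinomial machinery.
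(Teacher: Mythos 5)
The paper itself gives no proof of this proposition --- it is stated as a slight variant of Preciso's results and simply referred to \cite{Pr98,Pr00} (see also \cite{La98,La07b}) --- and your sketch is a faithful reconstruction of exactly that cited argument: pointwise Taylor expansion in the second variable, Fa\`a di Bruno/Schauder estimates yielding a bound of the type $c_1(|\beta|+m+1)!\,\tilde\rho^{-|\beta|}$ on $\|D^\beta u_0\circ v_0\|_{C^{m,\alpha}}$ uniformly in $\beta$, the Banach algebra property of $C^{m,\alpha}(\mathrm{cl}\Omega')$, and the multinomial summation $\sum_{|\beta|=k}|\beta|!/\beta!=h^k$, with the $U$-dependence entering linearly. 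In a full write-up you would only need to make explicit that for $\|V\|_{C^{0}}$ small the segments $v_0+tV$, $t\in[0,1]$, stay in $\Omega$ and the Taylor remainder tends to zero (so the series indeed represents $T[u_0+U,v_0+V]$, and the normally convergent series of continuous homogeneous polynomials then gives analyticity); with that observation your estimates suffice, and no genuine gap remains.
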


\section*{Acknowledgment}
The author is endebted to Prof.~M.~Lanza de Cristoforis for proposing him the problem and for a number of comments which have improved the quality of the paper.

\end{document}